\documentclass[a4paper,10pt]{article}
\usepackage[utf8]{inputenc}
\usepackage{graphicx}
\usepackage{amsmath} 
\usepackage{amssymb}
\usepackage{amsthm}
\usepackage[pdftex,colorlinks=true,urlcolor=blue,citecolor=black,anchorcolor=black,linkcolor=black]{hyperref}
\usepackage[a4paper,top=3.0cm, bottom=3.0cm, left=2.5cm, right=2.5cm]{geometry}
 \usepackage{array}
 \allowdisplaybreaks

\newtheorem{theorem}{Theorem}

\newtheorem{definition}{Definition}
\newtheorem{example}{Example}
\newtheorem{remark}{Remark}
\newtheorem{proposition}{Proposition}
\newtheorem{lemma}{Lemma}

 \numberwithin{theorem}{section}
 \numberwithin{lemma}{section}
 \numberwithin{example}{section}
 \numberwithin{remark}{section}
 \numberwithin{proposition}{section}
 \numberwithin{definition}{section}
 \numberwithin{equation}{section}

\newcolumntype{C}[1]{>{\centering\let\newline\\\arraybackslash\hspace{0pt}}m{#1}}
\newcommand{\integers}{{\mathbb Z}}
\newcommand{\reals}{{\mathbb R}}
\title{Regenerative Simulation for Queueing Networks with Exponential or Heavier Tail Arrival Distributions}
\author{Sarat Babu Moka \\ Sandeep Juneja \\ [8pt] 
School of Technology and Computer Science \\ 
Tata Institute of Fundamental Research \\
Mumbai - 400005, INDIA}

\begin{document}

\maketitle
\vspace{-4mm}
\begin{abstract}
Multiclass open queueing networks find wide applications in communication, computer and fabrication networks. Often one is interested in steady-state performance
measures associated with these networks. Conceptually, under mild conditions, a regenerative structure exists in multiclass networks, making them amenable to 
regenerative simulation for estimating the steady-state performance measures. However, typically, identification of a regenerative structure
in these networks is difficult. A~well known exception is when  all the interarrival times are exponentially distributed, where the instants corresponding to customer arrivals to 
an empty network constitute a regenerative structure. In this paper, we consider networks where the interarrival times are generally distributed but have exponential
or heavier tails. We show that these distributions can be decomposed into a mixture of sums of independent random variables such that at
least one of the components is exponentially distributed. This allows an easily implementable embedded regenerative structure in the Markov process. 
We show that under mild conditions on the network primitives, the regenerative mean and standard deviation estimators are consistent and satisfy a joint 
central limit theorem useful for  constructing asymptotically valid confidence intervals. We also show that amongst all such interarrival time decompositions, 
the one with the largest mean exponential component minimizes the asymptotic variance of the standard deviation estimator.
\end{abstract}
\vspace{-2mm}
\section{Introduction}

As is well known, a regenerative process is a stochastic process with a sequence of random time instants (known as regeneration times or an embedded renewal process)
such that at these instants the process probabilistically regenerates itself [\cite{WLS55}; \cite{Asm03}]. 
Regenerative simulation exploits this structure by generating independent and identically distributed ($i.i.d.$) cycles via simulation
and extracting consistent estimators of the steady-state performance measures from them. For a detailed review of the regenerative simulation, see  \cite{HN06} 
(also see, e.g, [\cite{GI87}; \cite{GP94}; \cite{HG01}; \cite{Asm03}]).\\

A typical multiclass queueing network consists of a set of single server stations and it is populated by customers of a finite number of classes. 
Every customer visits stations in a sequential order, receives service at each station visited, possibly with a queueing delay, and then  moves on. 
An \textit{open} queueing network is one in which customers arrive from the outside, each customer receives a finite number of 
services possibly at different stations, and then leaves the network
(see, e.g., [\cite{CH95}; \cite{DM95}; \cite{Dai95}]).
In this paper, we focus on multiclass open queueing networks and hereafter, refer to them as multiclass networks. 
These models find wide applications in communication, computer and manufacturing networks (see, e.g., [\cite{Kumar89}; \cite{Kumar90}; \cite{Kumar91}]). 
Typically, regenerative simulation is considered applicable to queueing networks when the interarrival times are exponentially distributed, 
where for example, instants of arrivals to an empty network denote a sequence of regeneration times. However, a-priori it is not clear whether implementable regeneration 
schemes can be identified for a queueing network when the interarrival times are generally distributed. 
In this paper, we construct an implementable regenerative simulation method to estimate the 
steady-state performance measures of multiclass networks when, roughly speaking, 
the interarrival times are generally distributed and have exponential or heavier tails. 
Networks where the interarrival times have exponential or heavier tails, e.g., 
Gamma or Hyper-exponential distributions, are common  in practice (e.g, see \cite{LJ97}; \cite{GMN97}).
A preliminary version of this paper will appear in \cite{SS13}. \\ 

Typically, the performance measures of interest in such  networks  involve expectations of
random variables in steady-state (e.g., steady-state expected queue length). 
These performance measures, aside from the simple settings, do not have an analytical solution, and one needs to resort to numerical methods to estimate them.
If the underlying Markov process visits a particular set of states infinitely often with probability one such that the process regenerates 
every time it leaves the set then that set can be used to conduct regenerative simulation to estimate the steady-state performance measures.
For example, in a stable $GI/G/1$ queue, system becomes empty infinitely often and it regenerates every time an arrival finds the system empty.
However,  in most queueing networks, such states are difficult to identify. \\

\cite{Dai95} establishes that under stability of the fluid limit model of a multiclass network, the associated Markov process is positive Harris recurrent.
\cite{AN78} and \cite{EN78} develop a splitting technique on the transition kernel of a positive Harris recurrent
Markov process that guarantees the existence of regenerations in the process (also refer to \cite{MT09} and \cite{GI81}).
However, identification of the regeneration instants involves explicit knowledge of the transition kernel that is typically difficult to
compute (see, e.g, \cite{HG99}).  A-priori it is not clear if implementable regeneration schemes can be developed for multiclass networks 
without the explicit knowledge of probability transition kernel.
Our analysis relies on the observation that random variables with exponential or heavier tails
can be re-expressed as a mixture of sums of independent random variables where at least one of the components is exponentially distributed.
This allows us to embed a regenerative structure in  queuing networks where the interarrival times are generally distributed with
exponential or heavier tails. In particular, the instants when arrivals of a particular class
of customers, say Class~1, that find the system empty and all the other classes are in \textit{exponential phase}, are regeneration times.
These regenerations typically exist even when Class~1 interarrival times have a super-exponential distribution such as a uniform distribution.
In addition to these regenerations, we also propose an alternative regenerative structure that exists when Class~1 interarrival times also have an 
exponential or heavier tail distribution. This corresponds to an arrival to an empty network just after all the classes are in exponential phase. \\

\cite{GI87} develop finite moment conditions on regenerative cycles under which the mean and the standard deviation estimators from   
regenerative processes can be shown to be consistent and satisfy a joint central limit theorem (CLT).
From simulation viewpoint, the latter is crucially useful as it allows construction of asymptotically valid error bounds or
confidence intervals along with a point estimator for the performance measure of interest.
Our another contribution is to show  that the existence of $p^{th}$ moments of the interarrival and service times, under mild stability conditions, 
is sufficient to guarantee the existence of $p^{th}$ moments of the regeneration intervals. \\

When there is more than one regenerative structure in a network, identifying the one with the best statistical properties becomes important.
The numerical examples in \cite{GI87} suggest that, in general, selection of regenerative structure with minimum mean return time need not minimize
the \textit{asymptotic variance of the standard deviation estimator (AVSDE)}. 
As is well known, classical regenerative processes are stochastic processes that can be viewed as concatenation of $i.i.d.$ cycles.
\cite{JC94} shows that combining adjacent cycles of classically regenerative process
increases the AVSDE. \cite{ACG95} generalize this result by showing that 
if one regenerative structure of a classically regenerative process is a subsequence of another, then the AVSDE associated with the original structure 
is larger than that associated with the subsequence. We generalize this result and show that, in our framework, 
the selection of interarrival time decomposition with the largest mean exponential component results in minimum AVSDE.\\

A potential drawback of our approach is that regenerations in a large network maybe infrequent since an arrival of a particular 
class not only needs to find the network empty but also that all the other classes in the exponential phase, 
thus restricting its application to smaller networks. However, with the advent of parallel computing it may be feasible to implement proposed ideas
to somewhat larger networks (see,~e.g,~\cite{PGHP91}).\\

The remaining paper is organized as follows: In Section \ref{Reg_process}, we review both the classical and general regenerative simulation, and the associated CLTs.
In Section \ref{sec_dec}, we show that under mild conditions, a random variable with an exponential or heavier tail distribution 
can be re-expressed as a mixture of sums of independent random variables where one of the constituent random variables has an exponential distribution.
Formal construction of multiclass network is presented in Section \ref{MCOQN}.
Section \ref{MomRevTimes} presents a regenerative simulation methodology for  these networks and establishes the finiteness of moments of regeneration intervals under 
appropriate assumptions on network primitives. In Section \ref{Freq_reg}, we establish that if one regenerative structure is a subsequence of another then the 
AVSDE associated with the former regenerative structure is at least as large as than that associated with the latter, and present two important applications 
of this result. Section \ref{Reg_sim} illustrates the proposed simulation method using simple numerical examples. 
Unless otherwise stated, all the proofs are presented in Section~\ref{sec:proofs}. 
\section{Notation and Terminology}
We first introduce notation that will be used throughout the paper. Assume that all the random variables and stochastic processes are defined on a common 
probability space $(\Omega, \mathcal{F}, P)$. For any metric space $\mathcal{S}$,  $\mathcal{B_S}$ denotes the Borel $\sigma$-algebra on it. 
The notation $X \sim F$ is used to denote that the distribution of a random variable $X$ is~$F$. 
Exponential  distribution with rate $\lambda > 0$ is denoted by $Exp(\lambda)$. We write $X_1 \stackrel{d}{=} X_2$ to denote the equivalence of distributions of 
random variables $X_1$ and $X_2$. The function $\phi_X(t) := \int_{-\infty}^{\infty}e^{ixt}dF(x)$ denotes the characteristic function of 
a random variable $X$ when $X \sim F$,
and we say that $\phi_X$ is absolutely integrable if $\int_{-\infty}^{\infty} |\phi_X(t)|dt < \infty$. A subprobability measure $\nu$ is a \textit{component} 
of the distribution of a $\mathcal{S}$-valued random variable $X$ if $P\left( X \in A \right) \geq \nu(A)$ for every Borel set $A \in \mathcal{B_S}$ 
(refer to \cite{THR2K}). The indicator function is denoted by $\mathbb{I}(\cdot)$, which is~1 if the argument is true and 0 otherwise.
We say that a probability distribution is \textit{lattice} if it is concentrated on a set of points of the form $a + nh$, where
  $h > 0$, $a$ is a real value and $n = 0, \pm 1, \pm 2, \dots$. Any non-zero $\sigma$-finite measure $\pi$ is an \textit{invariant} measure of the process
$X= \left\{ X(t): t \geq 0 \right\}$ if  $\pi(B) = \int_{\mathcal{X}} P\left(X(t) \in B | X(0) = y\right)\pi(dy)$, for all 
$B \in \mathcal{B_X}, t\geq 0$, where $\mathcal{X}$ is the state space of $X$. If every invariant measure is a positive scalar multiple of $\pi$, 
then it is well known that $\pi$ is the \textit{unique} invariant probability measure of the process $X$.  
We assume that all the processes considered in this paper are c\`{a}dl\`{a}g (right continuous paths with left limits).

\section{Regenerative Simulation}
\label{Reg_process}
As is well known,  a sequence of random variables $0 = T_{-1}\leq T_0 < T_1 < T_2 < \cdots$ is called a \textit{renewal process}
if the sequence of intervals $\left\{T_n - T_{n-1}; n \geq 1\right\}$ is an i.i.d. sequence and independent of $T_0$.
It is said to be  \textit{non-delayed} renewal process if  $T_0 = 0$; otherwise, it is a \textit{delayed} renewal process with delay $T_0$.\\

The following definition of regenerative process is based on \cite{Asm03} (also refer to \cite{THR83} where 
it is known as wide-sense regenerative process).
\begin{definition}
A stochastic process $Y = \left\{ Y(t): t \geq 0 \right\}$ is called regenerative if there exists a renewal process $0 \leq T_0 < T_1 < \cdots$ such that 
(i) $\left\{ Y(T_n + s):  s \geq 0 \right\}$ is independent of $\left\{  T_0,\dots,T_n\right\}$ and
	(ii) $\left\{ Y(T_n + s):  s \geq 0 \right\}$ is stochastically equivalent to $\left\{ Y(T_0 + s):  s \geq 0 \right\}$ for $n \geq 0$.
\end{definition}
The sequence $T_0, T_1, \dots$ is referred to as a sequence of \textit{regeneration times}.
In addition, if the regeneration cycles, $ \left\{ \left\{ Y(s): T_{n-1} \leq s < T_n \right\}, n\geq 0\right\}$, are independent
then the process is known as \textit{classically} regenerative.
We refer to $Y$ as a delayed (respectively, non-delayed) regenerative process if the associated renewal process
is delayed (respectively, non-delayed). 
Conceptually, one may think of a non-delayed classically regenerative process as a concatenation of $i.i.d.$ cycles. 
As is well known, when $T_0$ is a proper random variable (that is, $P(T_0 < \infty) \equiv 1$), the steady-state behavior of the process does not 
depend on the first cycle. To simplify the discussion, throughout this section, we assume that all the regenerative processes are non-delayed. \\  

The following theorem is important to our analysis. Refer, e.g., to  Theorem 1.2 in Chapter VI of \cite{Asm03}) for proof. 
Let $\mathcal{Y}$ be the state space of the process~$Y$, and for any distribution $\nu$ on $\left( \mathcal{Y}, \mathcal{B_Y} \right)$, let $ \mathbb{E}_{\nu} (\cdot) := \int_{\mathcal{Y}}\mathbb{E}_y(\cdot) \nu(dy)$, where
$E_y$ is the expectation operator associated with the probability measure $P_y$ that represents the law of the process when it starts in state $y$, that is, $Y(0) = y$.
\begin{theorem}
 \label{reg_2_thm}
  Suppose that $Y$ is a regenerative process with regeneration times $0 =  T_0 < T_1 < \cdots$ and the distribution of the first cycle
  length $T_1$ is non-lattice with finite mean. Then, the steady-state distribution $\pi$ exists and for any non-negative real valued function $h$,
\begin{align*}
   \mathbb{E}_{\pi}\left[ h\left(Y(t)\right)\right] &= \frac{1}{\mathbb{E}_{\varphi}\left[T_1\right]}\mathbb{E}_{\varphi}\left[ \int_0^{T_1}h\left(Y(s)\right)ds\right],
\end{align*}
where $\varphi$ is the distribution of the initial state $Y(0)$.
\end{theorem}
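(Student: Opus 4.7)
My plan is to establish the renewal equation for the quantity $u(t) := \mathbb{E}_{\varphi}[h(Y(t))]$ and then extract the limit via the key renewal theorem, recognizing that limit as the expectation under $\pi$. First I would condition on the location of the first regeneration time $T_1$, splitting $u(t)$ as
\begin{equation*}
u(t) \;=\; \mathbb{E}_{\varphi}\!\left[h(Y(t))\,\mathbb{I}(T_1 > t)\right] + \mathbb{E}_{\varphi}\!\left[h(Y(t))\,\mathbb{I}(T_1 \leq t)\right].
\end{equation*}
Writing $z(t)$ for the first term and $F$ for the distribution of $T_1$, I would invoke clause (i) of the regenerative definition (independence of $\{Y(T_1+s):s\geq 0\}$ from $T_1$) together with clause (ii) (stochastic equivalence to $\{Y(s):s\geq 0\}$ started from $\varphi$, using $T_0=0$) to rewrite $Y(t) = Y(T_1+(t-T_1))$ on $\{T_1\le t\}$ as an independent copy of the process evaluated at time $t-T_1$. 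This yields the classical renewal equation
\begin{equation*}
u(t) \;=\; z(t) \;+\; \int_0^t u(t-s)\,dF(s).
\end{equation*}

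Second, I would apply the key renewal theorem. The hypothesis that $F$ is non-lattice with $\mu := \mathbb{E}_{\varphi}[T_1] < \infty$ is exactly what the theorem requires on the arithmetic side. The remaining point is to check that the forcing term $z$ is directly Riemann integrable (dRI). For bounded non-negative $h$ this is immediate from $0\le z(t)\le\|h\|_\infty\,P_{\varphi}(T_1>t)$ combined with $\int_0^\infty P_{\varphi}(T_1>t)\,dt=\mu<\infty$ and the monotonicity of $P_{\varphi}(T_1>\cdot)$; for general non-negative $h$ I would first prove the identity for bounded truncations $h\wedge M$ and pass to the limit via monotone convergence on both sides. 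A swap of integration order (Fubini, justified by non-negativity) then gives
\begin{equation*}
\int_0^{\infty} z(s)\,ds \;=\; \mathbb{E}_{\varphi}\!\left[\int_0^{T_1} h(Y(s))\,ds\right],
\end{equation*}
so the key renewal theorem delivers $\lim_{t\to\infty} u(t) = \mu^{-1}\,\mathbb{E}_{\varphi}\!\left[\int_0^{T_1} h(Y(s))\,ds\right]$.

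Third, I would identify this limit with $\mathbb{E}_{\pi}[h(Y(t))]$. Applying the limit to indicator functions $h=\mathbb{I}_B$ for $B\in\mathcal{B}_{\mathcal{Y}}$ defines a set function $\pi(B) := \mu^{-1}\mathbb{E}_{\varphi}[\int_0^{T_1}\mathbb{I}(Y(s)\in B)\,ds]$. Countable additivity and the normalization $\pi(\mathcal{Y})=1$ follow from monotone convergence and the fact that the integrand telescopes to $T_1$; invariance under the semigroup of $Y$ follows from the time-shift stationarity encoded in the regenerative property, so $\pi$ is the (unique) stationary distribution. The displayed formula then extends from indicators to non-negative measurable $h$ by the standard monotone class / monotone convergence argument.

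The main obstacle I anticipate is not the renewal-equation manipulation itself but the care needed in moving between the wide-sense regenerative setting (where cycles are only one-sided independent of the past, not mutually independent) and the tools usually stated for classical regeneration. In particular, I would need to verify that the renewal equation above remains valid without independence of successive cycles, which is fine because only the first-cycle decomposition is used, and to confirm that direct Riemann integrability of $z$ really holds for the unbounded $h$ handled by truncation. Everything else — invoking the key renewal theorem and identifying $\pi$ — is routine given these checks.
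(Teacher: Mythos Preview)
The paper does not supply its own proof of this theorem; it simply refers the reader to Theorem~1.2 in Chapter~VI of Asmussen's \emph{Applied Probability and Queues}. Your outline is precisely the standard argument found there: derive the renewal equation $u = z + F * u$ by conditioning on $T_1$, apply the key renewal theorem under the non-lattice finite-mean hypothesis, and identify the limit as integration against the stationary measure. So your approach is correct and coincides with the cited source.

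One small remark: your handling of unbounded $h$ by truncation and monotone convergence is the right way to proceed, since direct Riemann integrability of $z$ can fail for unbounded $h$ even when the integral is finite; Asmussen likewise states the weak-convergence version first (bounded continuous $h$) and extends. Your comment about the wide-sense regenerative setting is also well placed: only the one-step decomposition at $T_1$ is used, so mutual independence of cycles is not needed, exactly as you note.
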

\begin{example}
\label{Ex_gen_reg}
\normalfont
Consider a simple queueing system consisting of a single server that serves two streams of arrivals. The interarrival and service times of each stream are independent
$i.i.d.$ sequences. Let
\begin{align*}
 Y(t) = \left(Q(t), U(t), V(t) \right), \ t \geq 0,
\end{align*}
where $Q(t) = \left[Q_1(t), Q_2(t)\right]$ is the queue length process with $Q_k(t)$ being the number of Stream~$k$ customers in the system, 
$U(t) = \left[U_1(t), U_2(t)\right]$ with $U_k(t)$ being the remaining time until the next Stream~$k$ arrival and 
$V(t) = \left[V_1(t), V_2(t)\right]$ with $V_k(t)$ being the remaining service time of Stream~$k$ that is under service at time $t$.\\

Suppose that the Stream~2 has exponentially distributed interarrival times. Then every instant when an arrival of Stream~1 finds the system empty is a regeneration
time of $Y$. One can easily verify that these regenerations are not classical regenerations.
\qed
\end{example}

Associated with every regenerative process there exists a classical regenerative process with the same steady-state performance measures. To see this, suppose that  $Y$
is a regenerative process with regeneration times $0 =  T_0 < T_1 < \cdots$. Now construct a classically regenerative process $X$ with regeneration times 
$0 =  S_0 < S_1 < \cdots$ by concatenating 
the sequence of $i.i.d.$ cycles $\left\{X(t): S_{n-1} \leq t < S_n\right\}, \ n \geq 1$ such that each cycle is probabilistically equivalent to the cycle 
$\left\{Y(t): 0 \leq t < T_1\right\}$.
Then, under the hypothesis of the above theorem,
\begin{align*}
   \mathbb{E}_{\pi}\left[ h\left(Y(t)\right)\right] = \frac{1}{\mathbb{E}_{\varphi}\left[T_1\right]}\mathbb{E}_{\varphi}\left[ \int_0^{T_1}h\left(Y(s)\right)ds\right]
                                                    = \frac{1}{\mathbb{E}_{\varphi}\left[S_1\right]}\mathbb{E}_{\varphi}\left[ \int_0^{S_1}h\left(X(s)\right)ds\right].
\end{align*}
Thus, one can exploit the classical regenerative property of $X$ to estimate the steady-state performance measures associated with $Y$. A classically regenerative process 
that associated with the process $Y$ in Example~\ref{Ex_gen_reg} can be constructed by generating a new arrival clock of Stream~2 independently of the past at every instant a Stream~1 customer 
arrival finds the system empty. This is possible because the interarrival times of Stream~2 are exponentially distributed and at these instants,  
arrival and service clocks of Stream~1 are generated independently of all previous history.\\

Often a steady-state performance measure of interest has the form $\bar{r} := \int_{\mathcal{Y}}h(y)\pi(dy)$,
where $h$ is a non-negative real valued function defined on $\mathcal{Y}$.
A natural simulation estimator for $\bar{r}$ is $r(t) := \frac{1}{t} \int_0^t h\left( X(s) \right)ds$.
Under minimal conditions, it can be shown that  $r(t) \longrightarrow \bar{r}, \ \ a.s.\ \text{as} \ \ t \rightarrow \infty.$\\

Now define $\beta(t) := \frac{\sum_{i=1}^{N(t)} R_i}{\sum_{i=1}^{N(t)} \tau_i}$ and 
$s(t) := \sqrt{\frac{\sum_{i=1}^{N(t)}\left( R_i - \beta(t)\tau_i \right)^2}{\sum_{i=1}^{N(t)} \tau_i}}$, for $t \geq 0$,
where the counting process $N(t)~:=~\max\left\{ n \geq 0: S_n \leq t \right\}$ which counts the number of regenerations that have occurred till time $t$, and
for each $i \geq 1$, $ R_i := \int_{S_{i-1}}^{S_i} h\left( X(s) \right)ds$ and $\tau_i~:=~S_i~-~S_{i-1}$. 
Set $W_i = R_i - \bar{r}\tau_i$ for each $i \geq 1$. For the proof of Theorem~\ref{reg_1_thm}, see, e.g,  \cite{GI87}.\\
\begin{theorem}
	\label{reg_1_thm}
	Let $X$ be a classically regenerative process with regeneration times $0= S_0 < S_1 < \cdots$ and invariant probability measure $\pi$.
	Set $\varphi(dy) := P\left( X(0) \in dy \right)$ and assume that $h \geq 0$.
	\begin{enumerate}
		\item[(i)] If $\mathbb{E}_\varphi \tau_1 < \infty $, then $\bar{r} = \frac{\mathbb{E}_{\varphi} R_1}{\mathbb{E}_{\varphi} \tau_1}$ and as 
		              $t \rightarrow \infty$
			\begin{align}
				r(t) &\longrightarrow \bar{r}, \ \ a.s.,\label{SLLN_r}\\
				\beta(t) &\longrightarrow \bar{r}, \ \ a.s. \label{SLLN_beta}
			\end{align}
		\item[(ii)] If $\mathbb{E}_\varphi \left[R_1^2 + \tau_1^2 \right] < \infty $, then as $t \rightarrow \infty,$			
			\begin{align}
			        s(t) \longrightarrow \sigma, \ &a.s.,\label{SLLN_s}\\
			        t^{\frac{1}{2}} \left( r(t)  -\bar{r}\right) &\Rightarrow \sigma\mathcal{N}(0,1),\label{CLT_r2}\\
			        t^{\frac{1}{2}} \frac{\left( r(t)  -\bar{r}\right)}{s(t)} &\Rightarrow \mathcal{N}(0,1),\label{CLT_r3}\\
		                t^{\frac{1}{2}} \frac{\left( \beta(t)  -\bar{r}\right)}{s(t)} &\Rightarrow \mathcal{N}(0,1),\label{CLT_beta}
                        \end{align}
			where $\sigma^2 = \frac{\mathbb{E}_{\varphi} W_1^2}{\mathbb{E}_{\varphi} \tau_1}$
			 is known as the \textit{time-average variance constant (TAVC)} of $h(X(\cdot))$.
		\item[(iii)] If $\mathbb{E}_\varphi \left[R_1^4 + \tau_1^4 \right] < \infty $ and $\sigma > 0$, then as $t \rightarrow \infty,$
			\begin{align}
			        t^{\frac{1}{2}} \left( r(t)  - \bar{r}, s(t) - \sigma \right) &\Rightarrow \mathcal{N}(\stackrel{\rightarrow}{0},\mathcal{K}),\label{JCLT_rs}\\
		                t^{\frac{1}{2}} \left( \beta(t)  - \bar{r}, s(t) - \sigma \right) &\Rightarrow \mathcal{N}(\stackrel{\rightarrow}{0},\mathcal{K}),\label{JCLT_bs}
                        \end{align}
			where
			\begin{align*}
				\mathcal{K} = \frac{1}{\mathbb{E}_\varphi \tau_1} \left[ \begin{array}{cc}
				\mathbb{E}_\varphi \left(W_1^2\right) & \frac{\mathbb{E}_\varphi\left[\left( A_1 - b W_1\right)W_1\right]}{2\sigma} \\
				\frac{\mathbb{E}_\varphi\left[\left( A_1 - b W_1\right)W_1\right]}{2\sigma} &
      				\frac{\mathbb{E}_\varphi \left[\left(A_1 - b W_1\right)^2\right]}{4\sigma^2}
					   \end{array}\right],
			\end{align*}
			$A_i = W_i^2 - \sigma^2 \tau_i$, $b = 2\mathbb{E}_\varphi\left( W_1 \tau_1\right)/\mathbb{E}_\varphi \tau_1$ and \
			$\mathcal{N}(\stackrel{\rightarrow}{0},\mathcal{K})$ represents a multivariate normal random variable with the covariance matrix $\mathcal{K}$.
	\end{enumerate}
\end{theorem}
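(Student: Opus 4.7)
For part (i), the plan is to reduce each statement to the strong law of large numbers on the i.i.d.\ cycle sequence. Decomposing
\begin{equation*}
  r(t) = \frac{N(t)}{t}\cdot\frac{1}{N(t)}\sum_{i=1}^{N(t)} R_i + \frac{1}{t}\int_{S_{N(t)}}^{t} h(X(s))\,ds,
\end{equation*}
I would combine the elementary renewal theorem, $N(t)/t\to 1/\mathbb{E}_\varphi\tau_1$ a.s., with the SLLN for the i.i.d.\ sequence $\{R_i\}$ to handle the main term, and bound the residual integral by $R_{N(t)+1}/t$, which vanishes a.s.\ by a Borel--Cantelli argument since $\mathbb{E}_\varphi R_1<\infty$. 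This yields \eqref{SLLN_r}; \eqref{SLLN_beta} follows at once from applying the SLLN separately to the numerator and denominator of $\beta(t)$.

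For part (ii), set $W_i=R_i-\bar r\tau_i$, which is i.i.d.\ mean-zero with $\mathbb{E}_\varphi W_1^2<\infty$. A boundary-term estimate gives
\begin{equation*}
  t^{1/2}(r(t)-\bar r) = \frac{1}{\sqrt{t}}\sum_{i=1}^{N(t)}W_i + o_P(1),
\end{equation*}
the error being of order $t^{-1/2}R_{N(t)+1}$, which is $o_P(1)$ under the second-moment hypothesis. Anscombe's random-index CLT, combined with $N(t)/t\to 1/\mathbb{E}_\varphi\tau_1$ a.s., then delivers \eqref{CLT_r2} with limiting variance $\sigma^2=\mathbb{E}_\varphi W_1^2/\mathbb{E}_\varphi\tau_1$. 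Expanding
\begin{equation*}
  s(t)^2 = \frac{\sum W_i^2}{\sum \tau_i} - 2(\beta(t)-\bar r)\frac{\sum W_i\tau_i}{\sum\tau_i} + (\beta(t)-\bar r)^2\frac{\sum \tau_i^2}{\sum \tau_i}
\end{equation*}
and applying the SLLN to each ratio together with $\beta(t)\to\bar r$ gives $s(t)^2\to\sigma^2$ a.s., so that \eqref{SLLN_s} follows from continuity of the square root. Writing $\beta(t)-\bar r=(\sum W_i)/(\sum \tau_i)$ shows $t^{1/2}(\beta(t)-\bar r)$ has the same weak limit as $t^{1/2}(r(t)-\bar r)$, and Slutsky's theorem then produces \eqref{CLT_r3} and \eqref{CLT_beta}.

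For part (iii), the plan is a bivariate random-index CLT obtained by linearizing $s(t)$. With $A_i=W_i^2-\sigma^2\tau_i$ (mean zero by the identity defining $\sigma^2$) and $b$ as in the statement, substituting $\beta(t)-\bar r=\sum W_i/\sum\tau_i$ into the expansion of $s(t)^2$ and using the SLLN $2\sum W_i\tau_i/\sum\tau_i\to b$ a.s.\ yields
\begin{equation*}
  s(t)^2-\sigma^2 = \frac{\sum_{i=1}^{N(t)}(A_i-bW_i)}{\sum_{i=1}^{N(t)}\tau_i} + o_P(t^{-1/2}).
\end{equation*}
Dividing by $s(t)+\sigma\to 2\sigma>0$ a.s.\ then gives
\begin{equation*}
  s(t)-\sigma = \frac{1}{2\sigma}\cdot\frac{\sum_{i=1}^{N(t)}(A_i-bW_i)}{\sum_{i=1}^{N(t)}\tau_i} + o_P(t^{-1/2}).
\end{equation*}
Thus the pair $(t^{1/2}(r(t)-\bar r),\,t^{1/2}(s(t)-\sigma))$ is asymptotically equivalent to a scaled partial sum of the i.i.d.\ mean-zero bivariate vectors $\bigl(W_i,\,(A_i-bW_i)/(2\sigma)\bigr)$ evaluated at the random index $N(t)$ and normalized by $t/\sum\tau_i\to 1/\mathbb{E}_\varphi\tau_1$. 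A two-dimensional Anscombe CLT then produces \eqref{JCLT_rs} with covariance matrix $\mathcal{K}$ read off directly from the covariance of this bivariate vector; \eqref{JCLT_bs} follows by the same reduction applied to $\beta(t)$. The principal obstacle is controlling the $o_P(t^{-1/2})$ remainder in the linearization of $s(t)^2$: this is where the fourth-moment hypothesis is essential, supplying the $L^2$ rates on $\sum W_i\tau_i/N(t)-\mathbb{E}_\varphi(W_1\tau_1)$ and on $\beta(t)-\bar r$ that make the quadratic correction terms and the incomplete final cycle vanish on the $t^{-1/2}$ scale.
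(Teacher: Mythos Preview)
The paper does not supply its own proof of this theorem: it simply cites \cite{GI87} (``For the proof of Theorem~\ref{reg_1_thm}, see, e.g., \cite{GI87}''), treating the result as known background. Your sketch is a faithful outline of the standard Glynn--Iglehart argument---SLLN on i.i.d.\ cycles for part~(i), Anscombe's random-index CLT plus boundary control for part~(ii), and linearization of $s(t)^2$ about $\sigma^2$ followed by a bivariate Anscombe CLT on $(W_i,(A_i-bW_i)/(2\sigma))$ for part~(iii)---so there is no meaningful discrepancy to report.
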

\ \\
In the above theorem, $(i)$ constitutes the strong law of large numbers  ($SLLN$) for point estimators $r(t)$ and $\beta(t)$.
As is well known, $\beta(t)$ is the natural regenerative simulation estimator of $\bar{r}$. Part $(ii)$ constitutes $SLLN$ for the standard deviation estimator
$s(t)$ and the central limit theorems (CLTs) for $r(t)$ and $\beta(t)$. From (\ref{SLLN_s}) and (\ref{JCLT_bs}),
it is clear that $s(t)$ is an estimator of the asymptotic standard deviation $\sigma$ and it converges  at rate $\sqrt{t}$.
Notice from  (\ref{CLT_r2}) that, associated with the estimator $r(t)$, 
the asymptotic $100(1 - \delta)\%$ confidence interval for $\bar{r}$ is $\left[ r(t) - \frac{z\sigma}{\sqrt{t}}, r(t) + \frac{z\sigma}{\sqrt{t}}\right] $,
where $z$ solves the equation  $P\left(-z \leq \mathcal{N}(0,1) \leq z \right) = 1- \delta$.
On the other hand, from (\ref{SLLN_beta}), $\beta(t)$ is another point estimator for $\bar{r}$ and it \textit{depends} on the regenerative structure.
In this case, the associated $100(1 - \delta)\%$ confidence interval is 
$\left[ \beta(t) - \frac{z\sigma}{\sqrt{t}},\ \beta(t) + \frac{z\sigma}{\sqrt{t}}\right] $. In practice, $\sigma$ is unknown and must be estimated. 
A natural regenerative standard deviation estimator is $s(t)$ and thus, replacing $\sigma$ by $s(t)$, the associated asymptotic $100(1~-~\delta)\%$ confidence interval is 
$\left[ \beta(t) - \frac{z s(t)}{\sqrt{t}},\ \beta(t) + \frac{z s(t)}{\sqrt{t}}\right]$.
Finally, part $(iii)$ constitutes the joint $CLT$ for both $\left(r(t),s(t)\right)$ and $\left(\beta(t),s(t)\right)$ and describes the asymptotic variance of the 
standard deviation estimator $s(t)$. 
Now it is clear that, to construct valid confidence intervals, one sufficient condition is to ensure that 
$\mathbb{E}_\varphi \left[R_1^4 + \tau_1^4 \right] < \infty $. To establish these moments, in Sections \ref{moment_tau} and \ref{section:moment_Y}, 
we study sufficient moment conditions on interarrival and service times in the context of multiclass networks. 

\section{Extracting Exponential Component in a Distribution}
\label{sec_dec}

In this section, we show that under mild conditions any random variable with an exponential or heavier tail distribution can be re-expressed as a mixture of sums of
independent random variables such that one of the constituent random variables has an exponential distribution. 
We then observe this decomposition for well-known distributions such as Pareto, Weibull, Gamma, etc.\\

Let $f$ be the probability density function of a real valued random variable. We say that $f \in \mathcal{H}$, if there exists an $a \in [-\infty, \infty)$
such that $f(x) = 0$,  for all $x < a$ (if $a > -\infty $), $f$ is differentiable on $(a,\infty)$ and
\begin{equation}
 \label{lambda_star}
 \lambda_f := \sup_{y \in (a,\infty)}\left(-\frac{f'(y)}{f(y)}\right) \in (0, \infty).
\end{equation}
Examples of densities in $\mathcal{H}$ are discussed later. For a density function $f \in \mathcal{H}$, define 
\begin{align}
   \label{G_lam_eq}
	G^f_\lambda(x) := F(x) + \frac{f(x)}{\lambda}, \ x \in \reals
\end{align}
where the distribution function $F(x) = \int_{-\infty}^x f(y)dy$, and $\lambda > 0$.
Theorem~\ref{thm_dec1} is one of our key results 
and the proof primarily depends on the fact that any characteristic function uniquely identifies the associated probability distribution.
\begin{theorem}
\label{thm_dec1}
Suppose that $\xi$ is a random variable with density $f \in \mathcal{H}$ and $E \sim Exp(\lambda)$ for $\lambda \in \left(0,\infty\right)$. 
If $\lambda \geq \lambda_f$ then there exists a random variable $Z$ independent of $E$ such that
 \begin{equation}
  \label{EZ_dec}
 \xi \stackrel{d}{=} E + Z.
\end{equation}
Conversely, if $\phi_\xi$ is absolutely integrable, $f'$ is continuous on $(a,\infty)$ and (\ref{EZ_dec}) holds for some $Z$ independent of $E$ 
then $\lambda \geq \lambda_f$. In this case $Z \sim G^f_\lambda$.
\end{theorem}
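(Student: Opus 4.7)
The plan is to route everything through characteristic functions, since the defining identity of the theorem is really the factorisation
\[
\phi_\xi(t) \;=\; \phi_E(t)\,\phi_Z(t) \;=\; \frac{\lambda}{\lambda-it}\,\phi_Z(t),
\]
which gives the candidate $\phi_Z(t) = \phi_\xi(t)(\lambda-it)/\lambda = \phi_\xi(t) - \frac{it}{\lambda}\phi_\xi(t)$. The first summand is the characteristic function of $f$ and the second, heuristically, of $f'/\lambda$ on $(a,\infty)$ plus a mass $f(a^+)/\lambda$ at $a$ from the jump of $f$ there; this is exactly the signed measure $dG^f_\lambda$. So the whole theorem reduces to establishing that $G^f_\lambda$ has characteristic function $\phi_\xi(\lambda-it)/\lambda$, and that $G^f_\lambda$ is a \emph{bona fide} probability distribution precisely when $\lambda\ge\lambda_f$.

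For the forward direction I would first verify that $G^f_\lambda$ is a valid CDF. Monotonicity on $(a,\infty)$ is equivalent to $f(x)+f'(x)/\lambda\ge 0$, which is exactly $-f'(x)/f(x)\le\lambda$ and hence follows from $\lambda\ge\lambda_f$; clearly $G^f_\lambda(x)=0$ for $x<a$. For the limit at $+\infty$, note that the same bound $f'/f\ge -\lambda$ gives $f(x)\ge f(x_0)e^{-\lambda(x-x_0)}$, which combined with $\int f<\infty$ forces $f(x)\to 0$, so $G^f_\lambda(x)\to 1$. The possible jump of size $f(a^+)/\lambda$ at $a$ is nonnegative. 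I would then take $Z\sim G^f_\lambda$ independent of $E$ and compute
\[
\phi_Z(t) \;=\; \int_a^{\infty} e^{itx}\!\left(f(x)+\tfrac{f'(x)}{\lambda}\right)dx \;+\; \tfrac{f(a^+)}{\lambda}\,e^{ita}.
\]
Integration by parts on $\int_a^\infty e^{itx}f'(x)\,dx$ produces $-e^{ita}f(a^+) - it\,\phi_\xi(t)$ (using $f(x)\to 0$), and the boundary term cancels the delta contribution, leaving $\phi_Z(t) = \phi_\xi(t)(\lambda-it)/\lambda$. Multiplying by $\phi_E$ gives $\phi_\xi$, and uniqueness of characteristic functions yields $\xi \stackrel{d}{=} E+Z$.

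For the converse, assuming $\xi\stackrel{d}{=}E+Z$ with $Z$ independent of $E$, the factorisation above forces $\phi_Z(t)=\phi_\xi(t)(\lambda-it)/\lambda$. On the other hand, the signed measure $\mu$ on $\reals$ defined by density $f(x)+f'(x)/\lambda$ on $(a,\infty)$ together with a point mass $f(a^+)/\lambda$ at $a$ is finite (its total variation is bounded using $f\in L^1$ and continuity of $f'$), and by the same integration-by-parts computation above—legitimate because $f'$ is continuous on $(a,\infty)$ and $f(x)\to 0$, which follows from absolute integrability of $\phi_\xi$—its characteristic function is also $\phi_\xi(t)(\lambda-it)/\lambda$. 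Uniqueness of characteristic functions for finite signed measures then gives that the law of $Z$ coincides with $\mu$. Since the law of $Z$ is a probability measure, $\mu$ must be nonnegative, which forces $f(x)+f'(x)/\lambda\ge 0$ on $(a,\infty)$, i.e.\ $\lambda_f\le\lambda$, and identifies $Z\sim G^f_\lambda$.

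The main technical obstacle is the bookkeeping around the left endpoint $a$: if $f(a^+)>0$, then $G^f_\lambda$ has an atom there, and the density/Fourier calculation must treat $f$ as a function on all of $\reals$ with a jump at $a$. Care is needed so that the boundary term in the integration by parts exactly matches the mass of the atom; this is the step where the assumption that $f'$ be continuous on $(a,\infty)$ (rather than merely a.e.\ defined) becomes important. The only other mildly delicate point is proving $f(x)\to 0$ at infinity in the forward direction, which I would derive from integrability of $f$ combined with the lower exponential envelope $f(x)\ge f(x_0)e^{-\lambda(x-x_0)}$ implied by $\lambda\ge\lambda_f$.
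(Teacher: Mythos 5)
Your proposal is correct and follows essentially the same route as the paper: verify that $G^f_\lambda$ is a genuine distribution exactly when $\lambda\ge\lambda_f$, compute $\phi_Z(t)=\frac{\lambda-it}{\lambda}\phi_\xi(t)$ by the same integration by parts (including the cancellation of the boundary term against the atom at $a$), and conclude by uniqueness of characteristic functions. The only cosmetic difference is in the converse, where the paper applies the explicit inversion formula (this is where absolute integrability of $\phi_\xi$ enters) to recover $P(Z\le x)-P(Z\le y)=F(x)-F(y)-\frac{f(y)-f(x)}{\lambda}$, whereas you match $\phi_Z$ against the Fourier transform of the candidate signed measure and invoke uniqueness for finite signed measures; both are valid, and your needed fact that $f'\in L^1(a,\infty)$ follows from $\lambda_f<\infty$ (which bounds the negative part of $f'$ by $\lambda_f f$) rather than from continuity of $f'$ alone.
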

\begin{remark}
\normalfont
 \label{remark:superExp}
 Theorem~\ref{thm_dec1} ensures that if (\ref{lambda_star}) is true then $\xi$ must have an exponential or heavier tail distribution.
\end{remark}
Now we consider some  practically important classes of  distributions with exponential or heavier tails.

\begin{example}[\textbf{Lognormal}]
\normalfont
 Suppose $\xi = \exp\left[ \mathcal{N}\left(\mu, \sigma^2\right)\right]$, that is, $\xi$ has a Lognormal distribution, where $\mathcal{N}\left(\mu, \sigma^2\right)$
 represents a normal random variable with mean $\mu$ and variance $\sigma^2~>~0$. Then, its pdf
 \begin{align*}
 g(x) = \frac{1}{x\sqrt{2\pi \sigma^2}} \exp\left[{-\frac{(\log x - \mu)^2}{2\sigma^2}}\right], \ x > 0.
\end{align*}
Therefore $ - \frac{g'(x)}{g(x)} = \frac{1}{x}\left[ 1 + \frac{\log x - \mu}{\sigma^2}\right]$. It is easy to compute that 
$\displaystyle\lambda_g =  \frac{\exp\left(\sigma^2 -(\mu +1)\right)}{\sigma^2}$
and it is achieved at $x = \exp\left(\mu + 1 - \sigma^2 \right)$. Thus, the mean of the maximum exponential component 
that can be extracted in this manner equals $\sigma^2 \exp((\mu+1)-\sigma^2)$.  
As is well known, $\mathbb{E}\left[\xi\right] = \exp(\mu+\sigma^2/2)$, so that the ratio of the former to the latter equals
$\sigma^2 \exp(1-3/2 \sigma^2)$. Interestingly, this is independent of $\mu$ and is maximized at $2/3$ 
when $\sigma^2=2/3$. 
\qed
\end{example}

\begin{example}[\textbf{Gamma}]
\normalfont
\label{Gamma_dec}
 Suppose $\xi$ is $Gamma(\alpha,\gamma)$ distributed with pdf $g(x)~=~\frac{\gamma^{\alpha} x^{\alpha-1} \exp(-\gamma x)}{\Gamma(\alpha)}$ for
 $x \geq 0$ and $g(x)=0$, otherwise, where  $\gamma > 0$ and $\alpha \geq 1$. As is well known, Gamma distribution has an exponential tail.
 It is easily seen  that  $ \lambda_g= \sup \left(-\frac{g'(y)}{g(y)}\right) = \gamma$ and hence, we achieve the desired decomposition.
 \qed
\end{example}

\begin{example}[\textbf{Log-Convex densities, Pareto}]
\normalfont
\label{log_convex_ex}
 Let $\xi$ be a random variable with log-convex density $g$ that has support $[a,\infty)$ and is differentiable on $(a, \infty)$. 
 Since $-\log g(x)$ is concave function of $x$,
  $\frac{-g'(x)}{g(x)} = \frac{d}{dx}\left( -\log g(x)\right)$
 is a monotonically non-increasing function of $x$ (because $\frac{d^2}{dx^2}\left( -\log g(x)\right) \leq 0$). Furthermore,
  $\lambda_g = \frac{-g'(a)}{g(a)}$ and when $\lambda_g \in (0, \infty)$,  we achieve the desired decomposition. \\
  
  For instance, suppose that $\xi$ has Pareto distribution with the shape parameter $\alpha > 0$ and the scale parameter $\gamma > 0$,
that is, $P\left( \xi > x \right) = \frac{1}{(1 + \gamma x)^{\alpha}},\ \text{ for  } x \geq 0.$
Then its pdf  $g(x) = \frac{\alpha \gamma}{(1 + \gamma x)^{\alpha+1}}, \ x \geq 0$.
 Now it is easy to see that $\mathbb{E}\left[\xi \right] = [(\alpha-1) \gamma]^{-1}$. Furthermore,
 $\lambda_g= [(\alpha+1) \gamma ]$ so that the maximum mean of the extracted exponential component
 equals $[(\alpha+1) \gamma ]^{-1}$. In particular, the ratio of this extracted mean to the
 total mean equals $\frac{\alpha-1}{\alpha+1}$. It is independent of $\gamma$ and increases from zero to 1
 as $\alpha$ increases from 1 to~$\infty$. 
 \qed
 \end{example}

\begin{example}[\textbf{Hyper-exponential}]
\normalfont
Suppose that $\xi$ has the pdf 
\begin{align*}
 g(x) = p_1 \lambda_1 e^{-\lambda_1 x} + p_2 \lambda_2 e^{-\lambda_2 x}, \ x \geq 0,
\end{align*}
 where $p_1$, $p_2 > 0$,  $p_1 + p_2 = 1$, and $\lambda_1 > \lambda_2 > 0$.
In this case, any one of the two exponential components can be used for regenerative simulation (described later). 
It is reasonable to consider a component with the largest contribution to mean, that is, the one that maximizes
$\{\frac{p_i}{\lambda_i}\} $  for $i=1,2$.   Note that under
certain conditions it is possible to extract exponential distribution with mean greater than $\max\left\{\frac{p_1}{\lambda_1}, \frac{p_2}{\lambda_2}\right\}$.
Now observe that
\begin{align*}
-\frac{g'(x)}{g(x)} = \frac{p_1 \lambda_1^2 e^{-\lambda_1 x} + p_2 \lambda_2^2 e^{-\lambda_2 x}}{p_1 \lambda_1 e^{-\lambda_1 x} + p_2 \lambda_2 e^{-\lambda_2 x}}
\end{align*}
is a monotonically decreasing function that achieves maximum
at $x = 0$ and  reaches $\lambda_2$ as $x \nearrow \infty$. Therefore
$\lambda_g = \frac{p_1 \lambda_1^2 + p_2 \lambda_2^2}{p_1 \lambda_1 + p_2 \lambda_2}$ and since $p_1 = 1 - p_2$, it is easy to verify that
$ e(p_1) := \frac{1}{\lambda_g} = \frac{p_1 \lambda_1 + p_2 \lambda_2}{p_1 \lambda_1^2 + p_2 \lambda_2^2}$ is a monotonically
decreasing function of $p_1 \in [0,1]$ and $e(p_1)~\geq~\frac{p_1}{\lambda_1}$ for any value of $p_1$.
Using simple analysis, we can find that $e(p_1) \geq \max\left\{\frac{p_1}{\lambda_1}, \frac{p_2}{\lambda_2}\right\}$ holds, if either
(i) $\lambda_1\lambda_2 > \lambda_1^2 - \lambda_2^2$ or (ii) $\lambda_1\lambda_2 \leq \lambda_1^2 - \lambda_2^2$ and $ p_1 \in \left[ 1  - \frac{\lambda_1\lambda_2}{\lambda_1^2 - \lambda_2^2}, 1\right]$.
\qed
\end{example}

The example below illustrates the fact that there exist some distributions which may not have densities that belong to the family $\mathcal{H}$,
but their components can be in $\mathcal{H}$ (with a scaling factor).

\begin{example}[\textbf{Weibull}]
\normalfont
\label{Weibull_dec1}
 Suppose $\xi$ has a $Weibull(\alpha, \gamma)$ distribution with the shape parameter $\alpha < 1$ and scale parameter $\gamma > 0$. Then its tail distribution
  \begin{align*}
  P\left( \xi > x \right) = \exp\left(-(\gamma x)^{\alpha}\right),\ x > 0.
 \end{align*}
 If $g$ is the associated pdf, then $\frac{-g'(x)}{g(x)} = \alpha\gamma^{\alpha} \frac{1}{x^{1- \alpha}}  + \frac{(1- \alpha)}{x}$.
  Since $\alpha < 1$, $\frac{-g'(x)}{g(x)}$ is a decreasing function of $x$ that reaches $\infty$ as $x \searrow 0$ and reaches zero as $x \nearrow \infty$.
 Thus $\lambda_g = \sup_{y \in (0,\infty)} \left(-\frac{g'(y)}{g(y)}\right) = \infty$ and is achieved at $y = 0$. Therefore, $\xi$ may not be decomposed in the form (\ref{EZ_dec}).
 However, $ \sup_{y \in (\hat{a},\infty)} \left(-\frac{g'(y)}{g(y)}\right) < \infty$, for any $\hat{a} > 0$.
 Hence by fixing $\hat{a} > 0$, and letting $f(x) = c g(x)$ for $x \geq \hat{a}$ and $f(x)= 0$ for $x < \hat{a}$, 
 we have $f \in \mathcal{H}$ and $g \geq f/c$, where $c = \frac{1}{P\left( \xi > \hat{a} \right)}$. 
 \qed
\end{example}

 Example~\ref{Weibull_dec1} motivates a more general framework for extracting an exponential component.
To see this, suppose that $G$ is the distribution of a real valued random variable $\xi$ such that for some $q \in (0,1]$ and $f \in \mathcal{H}$ and 
$qf$ is a component of $G$, that is, $G(dy) \geq qf(y)dy$.
Let $\hat{\xi} \sim f$ and fix $\lambda \geq \lambda_f$, then from Theorem~\ref{thm_dec1},
$\hat{\xi} \stackrel{d}{=} E + Z,$ where $E \sim Exp(\lambda)$, $Z \sim G^f_\lambda$ and they are independent. If $q = 1$ then $f$ is the density of $G$, hence
$\xi \stackrel{d}{=} \hat{\xi} \stackrel{d}{=} E + Z.$ But if $q < 1$, then we can let 
$H(x) = \frac{G(x) - q\int_{-\infty}^x f(y)dy}{1 - q},\ \ x \in \mathbb{R}$.
Clearly, $H$ is a probability distribution function and $G(x) = (1-q)H(x) + q \int_{-\infty}^x f(y)dy,\ \ x \in \mathbb{R}$.
In other words
\begin{equation}
    \label{Gen_dec}
    \xi \stackrel{d}{=} (1 - \beta) \tilde{\xi} + \beta(E + Z),
\end{equation}
where $\beta$ is a Bernoulli random variable with $P(\beta = 1) = q$, $\tilde{\xi} \sim H$ and $\tilde{\xi}$, $E$, $\beta$ and $Z$ are independent of each other.

\begin{example}[\textbf{Log-Convex with $\mathbf{\lambda_g = \infty}$}]
\normalfont
 Recall Example \ref{log_convex_ex}, where the density of the random variable $\xi$ is a log-convex function $g$ that has support
 $[a,\infty)$ and is differentiable on $(a, \infty)$.
 Suppose that $\lambda_g = \infty$ (for example, Weibull distribution).  Since $\frac{-g'(x)}{g(x)}$ is a decreasing function of $x$,
 we can choose $\hat{a}$ such that $\frac{-g'(\hat{a})}{g(\hat{a})} < \infty$. Let $f(x)~=~\frac{1}{q} g(x)$ for $x \geq \hat{a}$ and $f(x) = 0$ for $x < \hat{a}$,
 where $q = P\left(\xi \geq \hat{a} \right)$. Clearly $f \in \mathcal{H}$ and $qf$ is a component of $g$. Hence $\xi$ can have decomposition of the form (\ref{Gen_dec}).\\
  
 Recall Example~\ref{Weibull_dec1}, that is, $\xi \sim Weibull(\alpha, \gamma)$ with $\alpha < 1$ and $\gamma > 0$. It is reasonable to choose $\hat{a} \geq 0$ 
 that maximizes the exponential mean contribution, $\frac{q}{\lambda_f}$. It turns out that this maximization is achieved at 
 $\hat{a} = \frac{1}{\gamma \alpha^\alpha}\left(1 - \alpha \right)^{\frac{1}{2\alpha}}$ and, in particular, it can be shown that the ratio of this extracted 
 exponential mean to the total mean is independent of~$\gamma$ (the expression is mathematically complex and is omitted).
 \qed
 \end{example}

 \section{Multiclass Open Queueing Networks}
\label{MCOQN}
The following notation is needed to characterize a  multiclass open queueing network:
\begin{table}[h]
\centering
\begin{tabular}{p{3cm} p{10.2cm}}
$d$:                           & number of single server stations indexed with $i=1,2,....,d$.\\
\\
$K$:                           & number of customer classes in the network.\\
\\
$\{\xi_{k,n}:n \geq 1\}$:      & sequence of exogenous interarrival times of Class~$k$. If $\xi_{k,1} = \infty$, then we say that
                                 external arrival process of Class~$k$ is \textit{null} or simply, Class~$k$ is null exogenous.\\
\\
$s(k)$:                        & station at which Class~$k$ customers take service.\\
\\
$\{\eta_{k,n}:n \geq 1\}$:     &  sequence of service times of Class~$k$ at station $s(k)$.\\
\\
$P_{kl}$:                      & probability of Class~$k$ customer becoming Class~$l$ customer upon completion of service at station $s(k)$,
                                independent of all previous history; the customer exits the network with probability $1~-~\sum_l P_{kl}$. \\
\\
$L$:                           & number of classes with \textit{non-null} exogenous arrivals. \\
\end{tabular}
\end{table}

We assume that $(I - P')^{-1} = (I + P + P^2 + \dots )'$ exists, where $P'$ is the transpose of the matrix~$P$.
Since $(k,j)$-element of $(I - P')^{-1}$ is the expected number of times a Class~$k$ customer visits Class~$l$ during its stay in the network,
every customer who enter into the network will leave it eventually. Hence, the network described above is an open queueing network.\\

Like the $GI/G/1$ queue, any queueing network with a single non-null exogenous class regenerates at every
instant when a customer arrival finds the system empty. Hence a regenerative structure trivially exists in these networks.
So, without loss of generality we assume that $L \geq 2$ and first $L$ classes are non-null exogenous.

\subsection{Assumptions}
\label{assumptions}
 Throughout the paper, we make the following assumptions on the network primitives.
\begin{enumerate}
 \item [(A1)] $\boldsymbol{\xi_1,\xi_2, \dots , \xi_L, \eta_1, \eta_2,  \dots , \eta_K}$ are $i.i.d.$
             sequences and mutually independent, \\ where $\boldsymbol{\xi_i} = \left\{ \xi_{i,n} : n \geq 1 \right\}$ and
             $\boldsymbol{\eta_i} = \left\{ \eta_{i,n} : n \geq 1 \right\}$.
 \item[(A2)] There exists $p \geq 1$ such that \\ $0 < \mathbb{E}[(\xi_{k,1})^p] < \infty$ for $k = 1,\dots,L$ and 
	     $0< \mathbb{E}[(\eta_{k,1})^p]< \infty$ for $k = 1,\dots, K$.
 \item[(A3)] For each $k = 2, \cdots, L$, there exists $f_k \in \mathcal{H}$ such that $P\left(\xi_{k,1} \in dx \right) \geq \bar{q}_k f_k(x)dx$ for some $\bar{q}_k > 0$,
             where $\mathcal{H}$ is defined in Section \ref{sec_dec}.
 \item[(A4)] Distribution of $\xi_{1,1}$ is spreadout, that is, $P(\xi_{1,1} + \cdots +\xi_{1,j} \in dx) \geq \bar{q}(x)dx$ 
             for some non-negative function $\bar{q}(x)$ and integer $j$ such that $\int_0^\infty \bar{q}(x)dx > 0$. 
 \item[(A5)] $P(\xi_{1,1} \geq x \text{ and } \sum_{l=1}^K \eta_{l,1} < x) > 0$ for some $x$.
\end{enumerate}
Assumption (A1) is standard. It is clear from Section~\ref{sec_dec} that (A3) holds only when the interarrival times have either exponential or heavier tails.
(A3) and (A4) are useful in establishing the ergodicity of the network as well as identifying regenerative structures in it, 
while (A2) and (A5) are useful in establishing finite moments of regeneration intervals  as we see in the later sections. Notice that (A5) trivially holds when the
interarrival times of Class~1 are unbounded or the service time distribution of every class has support in every neighborhood of 0. \\

We  denote the common distribution of the interarrival times of Class~$k$ by $F_k$ with the average arrival rate
$\alpha_k := \frac{1}{\mathbb{E}[\xi_{k,1}]}$, where $k\in\{1,2,\dots L \}$.
Similarly,  the common distribution of the service times of Class~$k$ is denoted by $H_k$ with the average service rate
$\mu_k := \frac{1}{\mathbb{E}[\eta_{k,1}]}$, where $k\in\{1,2,\dots K \}$.
Let $\mathcal{C}_i := \{k: s(k) = i\}$ be the \textit{constituency} for station $i \in \{1,2, \cdots, d\}$. By letting $\sigma := (I - P')^{-1} \alpha$, 
one can interpret $\sigma_k$ as the \textit{effective} arrival rate to Class~$k$. Then $\rho_i := \sum_{k \in \mathcal{C}_i} \frac{\sigma_k}{\mu_k}$ is 
the \textit{nominal load} for server $i \in \{1,2,\dots,d \}$ per unit time. 
From (A3) and (\ref{Gen_dec}), without loss of generality, we can write 
\begin{equation}
	\label{TwoMP2}
        \xi_{k,n} = (1 - \beta_{k,n})\tilde{\xi}_{k,n} + \beta_{k,n} (E_{k,n} + Z_{k,n} ), \ k = 2, \dots , L, \ n \in \integers_+,
\end{equation}
where $E_{k,n} \sim Exp(\lambda_k)$
for some $\lambda_k \geq \lambda_{f_k}$, $\beta_{k,n}$ is a Bernoulli random variable
with $P(\beta_{k,n} = 1) = \bar{q}_k$, $Z_{k,n} \sim G^{f_k}_{\lambda_k}$ and 
$\tilde{\xi}_{k,n} \sim \displaystyle \frac{F_k(x) - \bar{q}_k\int_0^x f_k(y)dy}{1- \bar{q}_k}, \ x \geq 0$; and $\{ \tilde{\xi}_{k,n} : n\geq~0\}$, 
$\left\{ E_{k,n} : n\geq 0\right\}$, $\left\{ \beta_{k,n} : n\geq 0\right\}$ and $\left\{Z_{k,n} : n\geq 0\right\}$ are $i.i.d.$ sequences and independent of each other.

\subsection{Markov Process}
\label{Markov_process}
Now, to describe the network, we propose the following Markov process that splits the interarrival times into two components:
\begin{eqnarray}
\label{Proc_Y}
	Y(t) &=& (Q(t),U(t), V(t)),
\end{eqnarray}
where $Q(t) = [Q_1(t), Q_2(t), \cdots , Q_K(t)]'\in \integers_+^K$. The process $Q_k(t)$ captures the number of Class~$k$ customers in the network at time $t$
or, more generally, it can capture positions of every Class~$k$ customer present at station $s(k)$ (in the later case $Q_k(t)$ is an infinite dimensional vector). 
Hereafter, the notations $\|Q_k(t)\|$ and $\|Q(t)\|$ denotes, respectively, the number of Class~$k$ customers and
the total number of customers present in the network at time $t$. The vector valued process $V(t) = [V_1(t), \dots, V_K(t)]' \in \reals_+^K$ with $V_k(t)$ being the residual
service time for  Class~$k$ customer that is under service. We take $V_k(t) = 0$ whenever $\|Q_k(t)\| = 0$. 
The vector valued process $U(t) = [U_1(t), \ $ $ U^{(e)}_2(t),U^{(ne)}_2(t),$ $\dots,U^{(e)}_L(t),U^{(ne)}_L(t)]' \in \reals_+^{2L-1}$
such that $U_1(t)$ being the remaining time until the next Class~$1$ customer arrival, and at each instant of a Class~$k \geq 2$ customer arrival, 
exponential and non-exponential components of the next interarrival time are generated independently and captured by $U^{(e)}_k$ and $U^{(ne)}_k$, respectively.
Without loss of generality, we can assume that the non-exponential clock $U^{(ne)}_k$ decreases first linearly with rate~1 while exponential clock $U^{(e)}_k$ stays at the same value until
$U^{(ne)}_k$ reaches zero. Thereafter, $U^{(e)}_k$ decreases linearly with rate~1 while other clock stays at zero. 
Next customer arrival happens when both the clocks are zero.
Clearly, at any time~$t$, $U^{(ne)}_k(t) + U^{(e)}_k(t)$ is the remaining time until the next Class~$k$ customer arrival.
We say that Class~$k$ is in exponential phase at time $t$ if $U^{(ne)}_k(t) = 0$. In Section~\ref{RSMCN},
we see that this decomposition of the interarrival times play a crucial role in the construction of regenerations.\\

Let $\mathcal{Y}$ be the \textit{state space} of the process $Y$ and it is adapted to some filtration $\{\mathcal{F}_t~:~t~\geq~0\}$ that is larger or equal to the
natural filtration of the process $Y$. 
Hereafter, we assume that the state space $\mathcal{Y}$ is a complete and separable metric space with the \textit{norm} defined by
$\|y\| = \|q\|+\|u\|+\|v\|, \  y = (q,u,v) \in \mathcal{Y}$, where $\|q\| = \sum_{k=1}^K \|q_k\|$ with $\|q_k\|$ being the number of Class~$k$ customers in the network 
when the state is $y$, $\|u\| = \sum_{k=1}^L|u_k|$ and $\|v\| = \sum_{k=1}^K v_k$. Let $\left\{ P^t\left(y, B \right): B \in \mathcal{B_Y}, t\geq 0 \right\}$ be 
the probability transition kernel of $Y$, where $P^t\left(y, B \right) = P_y \left(Y(t) \in B \right)$ and $P_y$ represents the law of the process when it started 
initially in state $y$.\\

Throughout the paper, we assume that the server at each station is busy whenever there is work to be done (work-conserving) and it stays idle whenever there is no work.
Similar to Proposition 2.1 in \cite{Dai95}, we can establish the strong Markov property of $Y$ for a wide class of queueing disciplines, 
such as FIFO (First-In-First-Out),  LIFO (Last-In-First-Out), priority discipline, processor sharing, etc. 
\section{Regenerative Simulation of Multiclass Networks}
\label{MomRevTimes}
In this section, we identify a sequence of regeneration times and establish required finite moments on the associated regeneration intervals that satisfy the joint CLT.
Furthermore, we propose an alternative regenerative structure that exists when the interarrival times of Class~1 have an exponential or heavier tail distribution.
\subsection{Regenerations}
\label{RSMCN}
For any set $B \in \mathcal{B_Y}$, define the \textit{first hitting time} $\tau_B := \inf\{t \geq 0 : Y(t) \in B\}$, the \textit{first hitting time past $\delta$},
$\tau_B(\delta) := \inf\{t > \delta : Y(t) \in B\}$ and the \textit{first visiting time} $\Gamma_B := \inf\{t > \tau_{B^c} : Y(t) \in B\}$.\\

Let $D := \left\{(\stackrel{\rightarrow}{\mathbf{0}},u, \stackrel{\rightarrow}{\mathbf{0}}) \in \mathcal{Y}:  u_1 < \min_{k \geq 2}\left\{u^{(e)}_k\right\}, u^{(ne)}_l = 0, l\in \{2,\dots,L\}  \right\},$
where $\stackrel{\rightarrow}{\mathbf{0}}$ is all zeros vector of dimension $K$. The set $D$ captures the states that $Y$ can take when the network
is empty and the next transition is triggered by a Class~$1$ customer arrival while all the other non-null exogenous classes are in exponential phase.
Also let $S_{-1}~=~0$ and the $(n+1)^{th}$ revisit instant on set $D$, $S_n = \theta_{S_{n-1}}\circ \Gamma_D$, 
where~$\theta$ is the shift operator on the sample paths of the process $Y$. \\

Let $T_n = S_n + U_1(S_n)$ be the first instant when an arrival of Class~1 finds the system empty past $S_n$.
Just after leaving the set $D$, the state of $Y$ must belong to 
\begin{align*}
\tilde{D}~:=~\left\{(q,u,v) \in \mathcal{Y}:  \|q_1\| = 1, \|q_k\| = 0, \forall \ k \in \{2,\dots,K\}, u^{(ne)}_l = 0, \forall \ l \in \{2,\dots,L\}\right\}, 
\end{align*}
that is, for each $n \geq 0$, $Y(T_n-) \in D$ and $Y(T_n) \in \tilde{D}$. Consider the bounded subsets of $\tilde{D}$ of the form 
$A(r,s)~=~\left\{y \in \tilde{D} :u_1 \leq r_1, \ v_1 \leq s, u^{(e)}_k \leq r_k, \forall\ k \in \{2,\dots,L\}\right\}$, 
for $r = (r_1,\dots,r_L) \in \reals_+^L, s \in \reals_+$, and let $\varphi$ be the probability measure on $\mathcal{B_Y}$ such that 
$\varphi\left(A(r,s)\right) = F_1(r_1)H_1(s) \prod_{k=2}^L\left(1 - e^{-\lambda_k r_k}\right)$,
where $F_k, H_k$ are the distributions of Class~$k$ interarrival and service times (refer to Section \ref{assumptions}). Then it is clear that 
$\varphi\left(\tilde{D}\right) = 1$.\\

One can easily check that when $Y(0) \sim \varphi$, the process $Y$ is a non-delayed regenerative process with 
regeneration times $\left\{ T_n : n\geq 0 \right\}$ and $Y(T_n) \sim \varphi$ for every $n \geq 0$. In Section~\ref{Freq_reg} and Section~\ref{Reg_sim},
we refer to these regeneration times as the \textit{primary} regenerative structure.

\begin{remark}[\textbf{Alternative regenerative structure}]
\normalfont
\label{alt_reg}
 Suppose that Class~1 also satisfies Assumption~(A3) (that is, there exists $f_1 \in \mathcal{H}$ such that 
 $P\left(\xi_{1,1} \in dx \right) \geq \bar{q}_1 f_1(x)dx$ for some $\bar{q}_1 > 0$). Then an exponential component can be extracted from the distribution
 of the interarrival times of Class~1 and the arrival clock process can be written as a vector of exponential and non-exponential components.\\
 
 Let $\left\{\hat{S}_n: n\geq 0\right\}$ be the increasing sequence of times corresponding to customer departure that leaves the network empty when 
 all the interarrival clocks are in exponential phase. 
 Due to the memoryless property of exponential random variables, the underlying Markov process has regenerations at every 
 $\displaystyle \hat{T}_n = \hat{S}_n + \min_{1 \leq k \leq L}\left\{U^{(e)}_k\left(\hat{S}_n\right) \right\}$.
 In Section~\ref{Freq_reg}, we show that when Class~1 has exponential interarrival times, the AVSDE associated with 
 these alternative regenerative structure is smaller than that of previously proposed primary regenerative structure. This is further illustrated with a simple numerical
 example in Section~\ref{Reg_sim}.
\end{remark}

\subsection{Moments of Regeneration Intervals}
\label{moment_tau}
In this section, we establish the finite $p^{th}$ moments of the regeneration intervals,
where $p$ is the parameter used in (A2) that guarantees finite $p^{th}$~moments of the interarrival and service times. 
Hereafter, in addition to (A1) -  (A5), we make the following assumption on the Markov process.
\begin{enumerate}
 \item[(A6)]  There exists $t_0 > 1 $ such that $\displaystyle \lim_{\|y\| \rightarrow \infty} \frac{1}{\|y\|^p} \mathbb{E}_y \left[\|Y(t\|y\|)\|^p\right] = 0, \ \ \forall \ t \geq t_0$.
\end{enumerate}

\begin{remark}
\normalfont
\label{remark:assA6}
\cite{Dai95} shows that, under mild conditions, (A6) holds when Assumptions (A1) and (A2) hold and the fluid model of the network is stable.
 In particular, \cite{Dai95} considers some important networks like re-entrant lines and generalized Jackson networks, and a wide variety of queueing disciplines;
 and shows that the fluid model is stable if the nominal traffic condition (that is, $\rho_i < 1, \text{ for each } i = 1,\dots, d$) holds. 
  Recent work of \cite{SMW12} shows that under certain conditions, fluid model is stable if and only if there exists a Lyapunov function
 (also refer to \cite{SM12}).
\end{remark}

Now we show that the $p^{th}$ moments of the regeneration intervals are finite. Let $C_s := \left\{y \in \mathcal{Y}: \|y\| \leq s\right\}$, for $s \geq 0$.
Lemma \ref{Moment1} below establishes that for any initial state, when the $p^{th}$~moments of the interarrival and service times are finite, 
the time required for the queue length to become smaller than a certain level has finite $p^{th}$~moment. 
Furthermore, it says that there exists a bounded set such that the process visits the set infinitely often and the associated intervals have 
finite $p^{th}$~moments. The proof of Lemma~\ref{Moment1}  mainly depends on (A6).
\begin{lemma}
\label{Moment1}
There exist constants $c_1,\ c_2,\ s_0 > 0$ such that $\mathbb{E}_y\left[(\tau_{C_s}(\delta))^p\right] \leq c_1 + c_2\|y\|^p$, $y~\in~\mathcal{Y}$, $s \geq s_0$, $\delta > 0$.
Furthermore, for any bounded set $A$, $\sup_{y \in A} \mathbb{E}_y\left[(\tau_{C_s}(\delta))^p\right]~<~\infty$, $s \geq s_0.$
\end{lemma}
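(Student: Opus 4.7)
The plan is to exploit (A6) as a geometric $L^p$ contraction and iterate it along a state-dependent deterministic schedule. First I fix $\epsilon \in (0,1)$ (for concreteness, small enough that $\epsilon^{1/p} < 1$) and invoke (A6) to pick $s_0 > 0$ such that
\[
   \mathbb{E}_y\bigl[\,\|Y(t_0\|y\|)\|^p\bigr] \;\le\; \epsilon\,\|y\|^p \qquad \text{whenever } \|y\| > s_0.
\]
Starting from $y$ with $\|y\| > s_0$, define $\sigma_0 = 0$ and $\sigma_{n+1} = \sigma_n + t_0\,\|Y(\sigma_n)\|$, and let $N := \inf\{n \ge 0 : \|Y(\sigma_n)\| \le s_0\}$. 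By the strong Markov property applied at $\sigma_n$ (valid since each $\sigma_n$ is a stopping time built from the state) together with (A6), induction yields
\[
   \mathbb{E}_y\bigl[\,\|Y(\sigma_n)\|^p\,\mathbb{I}(N > n-1)\bigr] \;\le\; \epsilon^{\,n}\,\|y\|^p,
\]
because on $\{N>n-1\}$ every intermediate state has norm exceeding $s_0$, so (A6) is applicable at each step. Taking $p$-th roots and summing via Minkowski's inequality,
\[
   \bigl\|\sigma_N\bigr\|_p \;=\; t_0 \Bigl\|\textstyle\sum_{n\ge 0} \|Y(\sigma_n)\|\,\mathbb{I}(N>n)\Bigr\|_p \;\le\; t_0\,\|y\|\sum_{n\ge 0}\epsilon^{\,n/p} \;=\; \frac{t_0}{1-\epsilon^{1/p}}\,\|y\|.
\]

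At $\sigma_N$ the process satisfies $\|Y(\sigma_N)\| \le s_0 \le s$, hence $Y(\sigma_N) \in C_s$ and $\tau_{C_s} \le \sigma_N$. To handle the $\delta$-shift, I would write $\tau_{C_s}(\delta) \le \delta + \tau_{C_s}\!\circ\theta_\delta$ and apply the above bound after time $\delta$: by the Markov property,
\[
   \mathbb{E}_y\bigl[\tau_{C_s}\!\circ\theta_\delta^{\,p}\bigr] \;=\; \mathbb{E}_y\bigl[\,\mathbb{E}_{Y(\delta)}[\tau_{C_s}^{\,p}]\bigr] \;\le\; c\,\mathbb{E}_y\bigl[1+\|Y(\delta)\|^p\bigr].
\]
A straightforward estimate using (A2) (since the number of Class-$k$ arrivals in $[0,\delta]$ has a finite $p$-th moment by the renewal elementary bound, and each adds a service time with finite $p$-th moment) gives $\mathbb{E}_y[\|Y(\delta)\|^p] \le c_1' + c_2'\|y\|^p$. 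Combining, and using $(a+b)^p \le 2^{p-1}(a^p+b^p)$, produces the desired bound $\mathbb{E}_y[(\tau_{C_s}(\delta))^p] \le c_1 + c_2\|y\|^p$.

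The case $\|y\| \le s_0$ is absorbed into the same estimate because $1 \le 1 + \|y\|^p$. For the final "furthermore" claim, any bounded $A$ lies in some $C_M$, so $\sup_{y\in A}\mathbb{E}_y[(\tau_{C_s}(\delta))^p] \le c_1 + c_2 M^p < \infty$.

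\textbf{Main obstacle.} The primary difficulty is making the contraction argument in step one rigorous: one must verify that the strong Markov property applies at each $\sigma_n$ (these are state-dependent stopping times, not deterministic), and that (A6) — which gives only a pointwise-in-$y$ asymptotic statement — can be converted into the uniform-in-$y$ inequality $\mathbb{E}_y[\|Y(t_0\|y\|)\|^p] \le \epsilon\|y\|^p$ for all $\|y\|>s_0$. The latter requires reinterpreting the $\limsup$ form of (A6) to extract a uniform $s_0$, which is routine but must be done carefully. A secondary, purely technical, point is the Minkowski summation step, which requires $\epsilon < 1$ (rather than just $\epsilon^{1/p}<1$ in some formulations); choosing $\epsilon$ at the outset small enough handles both.
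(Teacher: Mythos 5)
Your proposal is correct and follows essentially the same route as the paper: extract from (A6) a uniform $L^p$-contraction $\mathbb{E}_y\|Y(t_0\|y\|)\|^p\le \epsilon\|y\|^p$ outside a level set $C_{s_0}$, iterate it along the state-dependent stopping times $\sigma_{n+1}=\sigma_n+t_0\|Y(\sigma_n)\|$ via the strong Markov property, and sum with Minkowski's inequality. The only difference is in handling the ``past $\delta$'' requirement: the paper sets the step length to $\delta+1$ whenever the current state lies in $C_s$ (so no extra estimate is needed), whereas you shift by $\delta$ first and invoke an additional, routine (A2)-based bound $\mathbb{E}_y\|Y(\delta)\|^p\le c_1'+c_2'\|y\|^p$; both work.
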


\begin{lemma}
\label{lemma3}
Suppose that $\tilde{\Gamma}$ is a stopping time with respect to the filtration
$\left\{\mathcal{F}_t:t\geq 0 \right\}$ such that $\displaystyle\inf_{y \in C_s} P_y(\tilde{\Gamma} \leq \delta) > 0$ for some $\delta > 0$ and $s > s_0$.
Then, there exist constants $c_3, c_4$ such that $ \mathbb{E}_y\left[\tilde{\Gamma}^p\right] \leq c_3 + c_4\|y\|^p$ for any $y \in \mathcal{Y}$,
\end{lemma}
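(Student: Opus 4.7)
The plan is to iterate the hitting times of $C_s$ and exploit the positive probability $\epsilon := \inf_{y \in C_s} P_y(\tilde{\Gamma} \leq \delta) > 0$ to obtain a geometric tail on the number of returns needed before $\tilde{\Gamma}$ occurs; Lemma~\ref{Moment1} then controls the $p$-th moment of each return interval. Concretely, I would set $\sigma_0 := \tau_{C_s}(\delta)$ and $\sigma_{n+1} := \sigma_n + \tau_{C_s}(\delta)\circ\theta_{\sigma_n}$ for $n \geq 0$, so that $Y(\sigma_n) \in C_s$ at every level, and aim to bound $\mathbb{E}_y[\tilde{\Gamma}^p]$ via the pathwise inequality
\begin{equation*}
\tilde{\Gamma} \;\leq\; \sigma_0 + \delta + \sum_{n=0}^{\infty}(\sigma_{n+1}-\sigma_n)\,\mathbb{I}(\tilde{\Gamma} > \sigma_n + \delta),
\end{equation*}
which holds because on $\{\sigma_N + \delta < \tilde{\Gamma} \leq \sigma_{N+1} + \delta\}$ the telescoping right-hand side equals $\sigma_{N+1} + \delta \geq \tilde{\Gamma}$.

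Next, by the strong Markov property at each $\sigma_n$ applied to the post-$\sigma_n$ trajectory, combined with $Y(\sigma_n) \in C_s$ and the hypothesized uniform lower bound, I would establish
\begin{equation*}
P_y(\tilde{\Gamma} > \sigma_n + \delta \mid \mathcal{F}_{\sigma_n}) \;\leq\; (1-\epsilon)\,\mathbb{I}(\tilde{\Gamma} > \sigma_n),
\end{equation*}
and, using $\sigma_n \geq \sigma_{n-1} + \delta$, deduce inductively the geometric tail $P_y(\tilde{\Gamma} > \sigma_n) \leq (1-\epsilon)^n$ for $n \geq 1$ (and trivially $\leq 1$ for $n = 0$).

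Then I would apply Minkowski's inequality to the pathwise bound, obtaining
\begin{equation*}
\|\tilde{\Gamma}\|_{L^p(P_y)} \;\leq\; \|\sigma_0\|_{L^p(P_y)} + \delta + \sum_{n=0}^{\infty}\bigl\|(\sigma_{n+1}-\sigma_n)\mathbb{I}(\tilde{\Gamma} > \sigma_n + \delta)\bigr\|_{L^p(P_y)},
\end{equation*}
and bound each summand by conditioning on $\mathcal{F}_{\sigma_n}$: the second part of Lemma~\ref{Moment1} together with $Y(\sigma_n) \in C_s$ gives $\mathbb{E}_y[(\sigma_{n+1}-\sigma_n)^p \mid \mathcal{F}_{\sigma_n}] \leq K := \sup_{y' \in C_s}\mathbb{E}_{y'}[\tau_{C_s}(\delta)^p] < \infty$, so $\mathbb{E}_y[(\sigma_{n+1}-\sigma_n)^p\mathbb{I}(\tilde{\Gamma} > \sigma_n + \delta)] \leq K(1-\epsilon)^n$ for $n \geq 1$. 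Taking $p$-th roots, the summed norms form the convergent geometric series $\sum_n K^{1/p}(1-\epsilon)^{n/p}$, which yields a finite constant $C_{\epsilon,s}$ independent of $y$. Combining this with the first part of Lemma~\ref{Moment1}, $\mathbb{E}_y[\sigma_0^p] \leq c_1 + c_2\|y\|^p$, and raising to the $p$-th power via $(a+b)^p \leq 2^{p-1}(a^p + b^p)$ produces the desired form $\mathbb{E}_y[\tilde{\Gamma}^p] \leq c_3 + c_4\|y\|^p$.

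The main obstacle will be justifying the conditional tail inequality in the second step, which requires that on $\{\tilde{\Gamma} > \sigma_n\}$ the residual quantity $\tilde{\Gamma} - \sigma_n$ can be identified with the analogous functional of the shifted process starting from $Y(\sigma_n) \in C_s$. This shift-compatibility is automatic when $\tilde{\Gamma}$ is a first-hitting (or first-visit) time, as will be the case in the paper's intended applications (e.g., $\Gamma_D$), but it must be invoked as a structural property of $\tilde{\Gamma}$ rather than as a generic consequence of being a stopping time.
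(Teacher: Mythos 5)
Your proposal is correct and follows essentially the same route as the paper's proof: iterate $\tau_{C_s}(\delta)$ to generate successive returns to $C_s$, obtain a geometric tail from $\epsilon := \inf_{y \in C_s} P_y(\tilde{\Gamma} \leq \delta) > 0$, and combine Minkowski's inequality with the uniform bound $\sup_{y \in C_s}\mathbb{E}_y\left[\tau_{C_s}(\delta)^p\right] < \infty$ from Lemma~\ref{Moment1}. Two small points to tidy: the indicator $\mathbb{I}(\tilde{\Gamma} > \sigma_n + \delta)$ is not $\mathcal{F}_{\sigma_n}$-measurable, so before conditioning you should enlarge it to $\mathbb{I}(\tilde{\Gamma} > \sigma_n)$ (which is), exactly as the paper enlarges $\{i < \hat{N}\}$ to $\{i \leq \hat{N}\}$; and the shift-compatibility $\tilde{\Gamma} \leq t + \theta_t \circ \tilde{\Gamma}$ that you rightly flag is also implicitly assumed in the paper's own step $\tilde{\Gamma} \leq \zeta_{\hat{N}} + \delta$, so your explicit acknowledgment of it is if anything more careful than the original.
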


Using Lemma~\ref{lemma3}, Lemma~\ref{Moment_D} establishes that whenever the interarrival and service times exhibit finite $p^{th}$ moments, then the $p^{th}$ moment of 
$\Gamma_D~+~U_1(\Gamma_D)$ is finite (recall that $\Gamma_D$ denotes the first visiting time on set $D$ and the process $U_1(\cdot)$ captures 
the remaining time until the next Class~1 customer arrival).

\begin{lemma}
\label{Moment_D}
There exists $\delta > 0$ such that $\inf_{y \in C_s} P_y(\Gamma_D + U_1(\Gamma_D)\leq \delta) > 0$.
Furthermore, there exist constants $c_3,\ c_4 > 0$, $\mathbb{E}_y \left[\left(\Gamma_D + U_1(\Gamma_D)\right) ^p\right] < c_3 + c_4\|y\|^p,\  y \in \mathcal{Y}$.
\end{lemma}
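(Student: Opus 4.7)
The second conclusion follows at once from the first by applying Lemma~\ref{lemma3} with $\tilde{\Gamma} := \Gamma_D + U_1(\Gamma_D)$; this is a stopping time with respect to $\{\mathcal{F}_t\}$ since $D \in \mathcal{B_Y}$ and $U_1$ is a coordinate of the state $Y$. So the substantive work is the first claim: exhibiting $\delta > 0$ (and some $s > s_0$) with $\inf_{y \in C_s} P_y(\Gamma_D + U_1(\Gamma_D) \leq \delta) > 0$.

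To establish this uniform lower bound, my plan is to construct, uniformly in $y \in C_s$, a concrete sample-path event of positive probability on which $Y$ reaches $D$ and then Class~1 arrives, all within time $\delta$. I would organize the construction into three stages. In the \emph{clearing} stage, I would invoke Assumption~(A5) to pick $x_0$ so that with positive probability all initial customers (together with any Class~$k \ge 2$ arrivals that unavoidably occur) are served within time less than $x_0$, while Class~1's next interarrival time exceeds $x_0$ and hence no Class~1 arrival intervenes. In the \emph{draining} stage, starting from an empty network, I would require that no Class~$k \ge 2$ arrival occurs for a fixed window of length $t_2$; by the decomposition~(\ref{TwoMP2}) the most recently generated exponential components $U_k^{(e)}$ exceed $t_2$ with positive probability, since the $Exp(\lambda_k)$ piece assigns positive mass to $(t_2,\infty)$. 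Throughout this window the non-exponential components $U_k^{(ne)}$ drain to $0$, provided they were initially bounded by $t_2$, which again holds with positive probability. In the \emph{alignment} stage, at the end of draining the remaining exponential clocks are again $Exp(\lambda_k)$-distributed by the memoryless property, so with positive probability each one exceeds the residual Class~1 clock $U_1$; at that instant $Y \in D$, and Class~1 arrives after a further bounded delay, giving $\tilde{\Gamma} \le \delta$.

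Since $\|y\| \le s$ caps the number of customers and clocks to be controlled, and since (A1) makes the three-stage event a product of independent sub-events, multiplying their uniformly positive probabilities gives the required lower bound for a suitable $\delta$ depending only on $s$. The main obstacle I anticipate is the bookkeeping in the clearing stage: Class~$k \ge 2$ customers arriving during clearing must themselves be served and their arrivals restart the non-exponential clocks that we later wish to drain. I would handle this by (i) a uniform almost-sure cap on the number of Class~$k$ arrivals in a window of length $O(x_0)$, obtainable from a positive-probability lower bound on the interarrival times guaranteed by~(A1)--(A2), and (ii) a routing-matrix argument — using that $(I - P')^{-1}$ exists — to bound the number of service completions each initial customer can trigger before leaving the network. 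Once these counts are uniformly bounded, the product of finitely many positive per-customer and per-class probabilities yields the uniform lower bound, and Lemma~\ref{lemma3} then delivers the moment estimate.
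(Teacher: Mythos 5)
Your proposal is correct and follows essentially the same route as the paper: reduce the moment bound to the uniform hitting-probability bound via Lemma~\ref{lemma3}, then exhibit, uniformly over $y \in C_s$, an explicit positive-probability sample path on a bounded time window that uses (A5) and the openness of the network (finitely many transitions per customer) to empty the system, the decomposition~(\ref{TwoMP2}) to place all Classes $k \geq 2$ in exponential phase, and the memoryless property to have a Class~1 arrival find the system in $D$. The only difference is cosmetic ordering of the stages --- the paper first brings Classes $k \geq 2$ into exponential phase by a time $t_1 = s$ and then empties the network with no further such arrivals, whereas you clear first and drain afterwards --- and your extra bookkeeping on arrival counts is a harmless refinement of what the paper does implicitly.
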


From Lemma~\ref{Moment1} and Lemma~\ref{Moment_D}, we can argue that the first cycle length $T_0$ is a proper random
variable for any initial state $y \in \mathcal{Y}$. Hence, hereafter, we assume that the regenerative process $Y$ is non-delayed (that is, $T_0 = 0$) 
and hence the initial state $Y(0) \sim \varphi$. 
The proposition below establishes finite moments of the regeneration intervals, $\tau_n = T_n - T_{n-1}$, $n~\geq~1$.
The non-lattice property of the distribution of $\tau_1$ simply follows 
from the spreadout Assumption~(A4) on interarrival times of Class~1. From Lemma~\ref{Moment_D},
\begin{align*}
\mathbb{E}_\varphi\left[\tau_1^p \right] &= \mathbb{E}_\varphi \left[\left(\Gamma_D + U_1(\Gamma_D)\right) ^p\right] \\
                                          &< c_3 + c_4\mathbb{E}\left[\|Y(0)\|^p\right] \\
                                          &< \infty.
\end{align*}
Hence, we have the following result.
\begin{proposition}
\label{Reg_MCN}
The distribution of $\tau_1$ is non-lattice and $\mathbb{E}_\varphi[\tau_1^p] < \infty$.
\end{proposition}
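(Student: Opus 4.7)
The proposition splits into a finite $p$-th moment claim and a non-lattice claim, which I will handle separately.

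For the moment bound, the display immediately preceding the statement already reduces the problem to verifying $\mathbb{E}[\|Y(0)\|^p] < \infty$ under $Y(0) \sim \varphi$. Since $\varphi$ is supported on $\tilde{D}$, one has $\|Q(0)\| = 1$ deterministically, and all $U^{(ne)}_k(0)$ and $V_k(0)$ for $k \geq 2$ vanish, while the remaining coordinates are independent with $U_1(0) \sim F_1$, $V_1(0) \sim H_1$, and $U^{(e)}_k(0) \sim Exp(\lambda_k)$ for $k = 2,\dots,L$. By (A2), $F_1$ and $H_1$ have finite $p$-th moments, and the exponentials have finite moments of all orders. The elementary inequality $(a_1+\cdots+a_m)^p \leq m^{p-1}(a_1^p+\cdots+a_m^p)$ then yields $\mathbb{E}[\|Y(0)\|^p] < \infty$, and substituting into the bound of Lemma~\ref{Moment_D} completes this half.

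For the non-lattice property, the plan is to use Assumption (A4) to exhibit an absolutely continuous component in the law of $\tau_1 = T_1$. Since $T_1$ is itself a Class~1 arrival epoch, one may write $T_1 = \xi_{1,1} + \cdots + \xi_{1,N_1}$, where $N_1 \geq 1$ is the random index of the Class~1 arrival that triggers the regeneration. By (A4), the $j$-fold convolution of $F_1$ dominates $\bar{q}(x)\,dx$ for some $j$, with $\int \bar{q} > 0$. The route I would take is to condition on a carefully chosen configuration of the remaining primitives (service times, Class~$\geq 2$ interarrivals, routing, and the exponential components from (A3)) under which, for $(\xi_{1,1},\dots,\xi_{1,j})$ ranging over a set of positive Lebesgue measure, the network empties and every Class~$k \geq 2$ enters its exponential phase by the $j$-th Class~1 arrival but no earlier one satisfies both conditions. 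On such a configuration $N_1 = j$ and $T_1 = \xi_{1,1} + \cdots + \xi_{1,j}$, so integrating the density bound of (A4) over this region produces an absolutely continuous piece of the law of $T_1$, yielding non-lattice.

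The main obstacle lies in this decoupling: $\{N_1 = j\}$ depends on $(\xi_{1,1},\dots,\xi_{1,j})$ through the network trajectory, so these variables cannot be factored out cleanly. The resolution uses (A5), which ensures the combined service time of all classes can be shorter than a Class~1 interarrival, together with the memoryless exponential components guaranteed by (A3), which furnish a positive-probability event on which Classes~$k \geq 2$ reliably enter exponential phase in a controlled fashion. These tools let one construct the desired configuration and extract the required spreadout component of $T_1$.
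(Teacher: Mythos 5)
Your treatment of the moment bound is exactly the paper's argument: the paper proves $\mathbb{E}_\varphi[\tau_1^p]<\infty$ by writing $\tau_1=\Gamma_D+U_1(\Gamma_D)$ and invoking Lemma~\ref{Moment_D} to get $\mathbb{E}_\varphi[\tau_1^p]<c_3+c_4\mathbb{E}[\|Y(0)\|^p]$; your explicit verification that $\mathbb{E}[\|Y(0)\|^p]<\infty$ under $\varphi$ (using the structure of $\tilde{D}$, Assumption (A2), and the exponential clocks) just fills in a step the paper leaves implicit. That half is correct and essentially identical.

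For the non-lattice half the paper offers no proof at all --- it states in one sentence that the property ``simply follows from the spreadout Assumption (A4)'' --- so your sketch goes beyond the text, and the strategy (exhibit an absolutely continuous component of the law of $T_1$ by pinning $N_1$ to a fixed $j$ on a positive-probability configuration of the other primitives) is the natural one. However, there is a concrete imprecision: (A4) supplies a density component only for the \emph{sum} $\xi_{1,1}+\cdots+\xi_{1,j}$, not a joint density for the vector $(\xi_{1,1},\dots,\xi_{1,j})$, so ``integrating the density bound of (A4) over a set of positive Lebesgue measure'' in $\reals^j$ is not licensed --- the individual $\xi_{1,i}$ may be purely atomic even though their $j$-fold convolution is spread out. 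The event $\{N_1=j\}$ must therefore be arranged to depend on $(\xi_{1,1},\dots,\xi_{1,j})$ only through a constraint under which the sum retains its absolutely continuous part; for instance, choosing $c$ with $\int_0^c\bar{q}>0$ and noting that on $\{\sum_{i\le j}\xi_{1,i}\le c\}$ one automatically has $\max_i\xi_{1,i}\le c$, so $P(\sum_{i\le j}\xi_{1,i}\in dx,\ \max_i\xi_{1,i}\le c)\ge\bar{q}(x)dx$ for $x\le c$, and then building the decoupling event (long exponential clocks via (A3), fast departures via (A5), and the openness of the routing matrix) so that $N_1=j$ holds whenever all $\xi_{1,i}\le c$. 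Your proposal identifies the right obstacle and the right tools but stops short of this construction, which is the only place where the argument as written would not go through verbatim.
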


\subsection{Moments of $\mathbf{R_1}$}
\label{section:moment_Y}
Let $h$ be a non-negative real valued function defined on $\mathcal{Y}$ and $ R_i = \int_{T_{i-1}}^{T_i} h\left( Y(s) \right)ds$, for $i~\geq~1$.
When $h$ is bounded (e.g., $h(x) = I(x > 10)$), there exists a constant $c$ such that $h < c$ and 
\begin{align*}
\mathbb{E}_\varphi \left[R_1^p \right] = \mathbb{E}_\varphi \left[ \left(\int_0^{\tau_1} h(Y(t))  dt\right)^p \right] \leq c^p\mathbb{E}_\varphi \left[ \tau_1^p  \right] < \infty. 
\end{align*}
But, when $h$ is unbounded, $\mathbb{E}_\varphi \left[R_1^p \right]$ can be infinite even though $\mathbb{E}_\varphi [\tau_1^p] < \infty$. However, we can guarantee these moments even when $h$ is unbounded
under some additional conditions as shown in the following proposition.

For each $r > 0$ and $s \geq s_0$, define
\begin{align*}
 J_{r,s}(y) := \mathbb{E}_y \left[ \left( \int_0^{\tau_{C_s}(\delta)}h(Y(t))  dt \right)^r\right], \ y \in \mathcal{Y},
\end{align*}
where $\delta$ is given by Lemma \ref{Moment_D} and $s_0$ is given by Lemma \ref{Moment1}.
\begin{proposition}
\label{Moment_Y}
Suppose that for a given $r > 0$, there exists $s \geq s_0$ such that $J_{r,s}(\cdot)$
is uniformly bounded on $C_s$ and $\mathbb{E}\left[ J_{r,s}(Y(0)) \right] < \infty$. Then $\mathbb{E}_\varphi \left[R_1^r \right] < \infty$.
\end{proposition}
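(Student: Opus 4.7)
The plan is to bound $R_1$ by the integral of $h$ over a random number $N$ of excursions between successive visits to $C_s$ (past $\delta$), obtain a geometric tail on $N$ via Lemma~\ref{Moment_D}, and then combine with the assumed control on $J_{r,s}$ using Minkowski's inequality (or subadditivity when $r<1$).

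First I would introduce the embedded sequence $\xi_0:=0$ and $\xi_{n+1}:=\xi_n+\tau_{C_s}(\delta)\circ\theta_{\xi_n}$ for $n\ge 0$, where $\delta$ is the constant from Lemma~\ref{Moment_D}. Each $\xi_n$ is a stopping time, $\xi_{n+1}-\xi_n>\delta$ by construction, and $Y(\xi_n)\in C_s$ for $n\ge 1$. Writing $I_n:=\int_{\xi_n}^{\xi_{n+1}}h(Y(t))\,dt$ and $N:=\inf\{n\ge 1:\xi_n\ge\tau_1\}$, one has $R_1\le\sum_{n=0}^{N-1}I_n=\sum_{n\ge 0}I_n\,\mathbb{I}(n<N)$, with $N<\infty$ a.s.\ since $\tau_1$ is a.s.\ finite (Proposition~\ref{Reg_MCN}) and $\xi_n\uparrow\infty$.

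Next I would establish a geometric tail for $N$. Set $p:=\inf_{y\in C_s}P_y(\Gamma_D+U_1(\Gamma_D)\le\delta)>0$ from Lemma~\ref{Moment_D}. On $\{N>n\}$ with $n\ge 1$ the cycle has not closed by time $\xi_n$, so the strong Markov property gives that the residual time to $\tau_1$ is distributed as $\Gamma_D+U_1(\Gamma_D)$ under $P_{Y(\xi_n)}$; since $Y(\xi_n)\in C_s$, this residual is at most $\delta$ with conditional probability at least $p$, and $\xi_{n+1}-\xi_n>\delta$ then forces $\tau_1\le\xi_{n+1}$, i.e.\ $N\le n+1$. Iterating $P(N>n+1\mid\mathcal{F}_{\xi_n})\le(1-p)\mathbb{I}(N>n)$ yields $P(N>n)\le(1-p)^{n-1}$ for all $n\ge 1$.

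Finally, by the strong Markov property $\mathbb{E}[I_n^r\mid\mathcal{F}_{\xi_n}]=J_{r,s}(Y(\xi_n))$. Writing $M:=\sup_{y\in C_s}J_{r,s}(y)<\infty$, this is $\le M$ for $n\ge 1$, while for $n=0$ it equals $J_{r,s}(Y(0))$ whose $P_\varphi$-expectation is finite by hypothesis. Since $\{n<N\}\in\mathcal{F}_{\xi_n}$, I obtain $\mathbb{E}_\varphi[I_n^r\,\mathbb{I}(n<N)]\le M(1-p)^{n-1}$ for $n\ge 1$ and $\mathbb{E}_\varphi[I_0^r]\le\mathbb{E}_\varphi[J_{r,s}(Y(0))]$. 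For $r\ge 1$, Minkowski's inequality yields
\[
\|R_1\|_r\;\le\;\mathbb{E}_\varphi[J_{r,s}(Y(0))]^{1/r}+\sum_{n\ge 1}\bigl(M(1-p)^{n-1}\bigr)^{1/r}<\infty,
\]
since $(1-p)^{1/r}<1$; for $0<r<1$, the subadditivity $(\sum a_i)^r\le\sum a_i^r$ gives $\mathbb{E}_\varphi[R_1^r]\le\mathbb{E}_\varphi[J_{r,s}(Y(0))]+M/p<\infty$. In either case $\mathbb{E}_\varphi[R_1^r]<\infty$.

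The main obstacle is the strong Markov step in the geometric decay argument: one must justify that, on $\{\tau_1>\xi_n\}$, the residual ``time-to-regenerate'' really is the $P_{Y(\xi_n)}$-distribution of $\Gamma_D+U_1(\Gamma_D)$, even though $\tau_1$ is defined as a functional of the trajectory from time $0$. Once that is in place the geometric bound, and hence the remaining Minkowski/subadditivity estimates, are routine.
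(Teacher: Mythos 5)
Your argument is essentially the paper's own proof: the same embedded chain of successive returns to $C_s$ past $\delta$, the same geometric tail obtained from $\inf_{y\in C_s}P_y(\Gamma_D+U_1(\Gamma_D)\le\delta)>0$ (Lemma~\ref{Moment_D}) via the strong Markov property, the same conditioning that bounds each excursion's contribution by $\sup_{C_s}J_{r,s}$, and the same Minkowski summation; the only cosmetic difference is that you define the stopping index through $\{\xi_n\ge\tau_1\}$ while the paper defines $\hat N$ directly through the shifted event $\{\theta_{\zeta_i}\circ(\Gamma_D+U_1(\Gamma_D))\le\delta\}$, which makes the strong Markov step you flag as the ``main obstacle'' immediate. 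Your additional subadditivity treatment of $0<r<1$ is a small bonus the paper omits.
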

The following example illustrates one possible application of Proposition~\ref{Moment_Y}.
\begin{example}
\label{Moment_Y_Ex}
\normalfont
Suppose that $h(Y(t)) \leq \|Y(t)\|$, where $\|\cdot\|$ denotes the norm on the state space $\mathcal{Y}$ (refer to Section~\ref{Markov_process}).
For example, if goal is to estimate the steady-state expected number of customers in the network then $h(Y(t)) = \|Q(t)\| \leq \|Y(t)\|$.\\

Now we show that if $p \geq 9$ in (A2) then $\mathbb{E}_\varphi \left[R_1^4 \right] < \infty$ (this is needed for Theorem~\ref{reg_1_thm} 
(iii) to hold). To see this, observe from Lemma~\ref{Moment1} that 
$\mathbb{E}_\varphi\left[ \tau_{C_s}(\delta)^8 \right] \leq c_1 + c_2 \mathbb{E}\left[ \|Y(0)\|^8\right] < \infty$  for any  $s > s_0$.
Similar to Proposition 5.3 of \cite{DM95}, it can be  shown that there exist $s \geq s_0$ and $c < \infty$ such that
 \begin{align*}
 \mathbb{E}_y\left[ \int_0^{\tau_{C_s}(\delta)} \|Y(t)\|^p dt \right] \leq c(\|y\|^{p+1} + 1). 
 \end{align*}
Since $p \geq 9$, we have that $\mathbb{E}\left[\|Y(0)\|^9\right] < \infty$, and hence 
\begin{align*}
\mathbb{E}_\varphi \left[ \int_0^{\tau_{C_s}(\delta)}\|Y(t)\|^8  dt \right] \leq c \left(\mathbb{E}\left[\|Y(0)\|^9\right] + 1\right) < \infty. 
\end{align*}
 Using Cauchy-Schwarz inequality, Jensen's inequality and the fact that $\tau_{C_s}(\delta)~\geq~1$, we can show that
 \begin{align*}
  \mathbb{E}\left[ J_{4,s}(Y(0)) \right]~&\leq~\left(\mathbb{E}_\varphi\left[ \tau_{C_s}(\delta)^8 \right]\mathbb{E}_\varphi  \left[ \int_0^{\tau_{C_s}(\delta)}\|Y(t)\|^8  dt \right]\right)^{\frac{1}{2}} < \infty.
 \end{align*}
 Similarly, uniform boundedness of $J_{4,s}(y)$ on $C_s$ can be established 
and from Proposition \ref{Moment_Y}, it follows that $\mathbb{E}_\varphi \left[R_1^q \right] < \infty$.
\qed
\end{example}

\section{Choice of Optimal Regenerative Structure}
\label{Freq_reg}
In this section, we show that under certain assumptions, if one regenerative structure is a subsequence of another then it is optimal to choose the original
sequence over the subsequence (optimal in the sense that the AVSDE associated with the subsequence is at least as large as that 
associated with the original sequence). Using this result, we show that the selection of interarrival time decomposition with the largest mean exponential component minimizes 
the AVSDE. \\

Suppose that $X$ is a non-delayed regenerative process with the regeneration times $\left\{T_n: n \geq 0\right\}$ and the initial state distribution $\varphi$. 
Let, for each  $i \geq 1$, $\tau_i~=~T_i~-~T_{i-1}$, $R_i = \int_{T_{i-1}}^{T_i} h\left( X(s) \right)ds$, 
where  $h$ is a non-negative, real valued function. Assume that there exists a filtration $\mathcal{G} := \left\{\mathcal{G}_n, n \geq 0\right\}$ and 
a strictly increasing sequence of integer valued stopping times $0~=~\nu_0~<~\nu_1 < \cdots$ adapted to $\mathcal{G}$ with $\nu_n - \nu_{n-1} \stackrel{d}{=} \nu_1, \ n \geq 1$
such that \textbf{(i)}~$\left\{ \left(R_i, \tau_i \right): i \geq 1 \right\}$ is adapted to $\mathcal{G}$, \textbf{(ii)}
$\left\{ \left(R_i, \tau_i \right): i \geq n \right\}$ is independent of $\mathcal{G}_{n-1}$ for all $n \geq 1$ 
and \textbf{(iii)} the sequence $\left\{S_n = T_{\nu_n}: n \geq 0\right\}$ is another regenerative structure of~$X$.
\begin{proposition}
\label{KSvsKT}
Under the above setup, if $\mathbb{E}_\varphi\left[\left(\int_0^{S_1} \left[h\left(X(s)\right) + 1\right]ds\right)^4\right] < \infty$,  
then the AVSDE associated with $\left\{S_n: n \geq 0\right\}$ is at least as large as that 
associated with $\left\{T_n: n \geq 0\right\}$.
\end{proposition}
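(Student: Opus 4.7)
The plan is to work at the cycle-reward level. Hypotheses (i)--(iii) force $\{(R_i,\tau_i)\}_{i\geq 1}$ to be an i.i.d. sequence: each pair is $\mathcal{G}_i$-measurable, and the future $\{(R_j,\tau_j):j\geq i\}$ is independent of $\mathcal{G}_{i-1}$ with the same law as the original. Set $W_i = R_i - \bar{r}\tau_i$, $A_i = W_i^2-\sigma^2\tau_i$, $b = 2\mathbb{E}_\varphi[W_1\tau_1]/\mathbb{E}_\varphi[\tau_1]$, $U_i = A_i - bW_i$, and define $\tilde W_1, \tilde\tau_1, \tilde A_1, \tilde b, \tilde U_1$ analogously from $\{S_n\}$. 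Theorem~\ref{reg_1_thm}(iii) expresses the AVSDEs as $V_T = \mathbb{E}_\varphi[U_1^2]/(4\sigma^2\mathbb{E}_\varphi[\tau_1])$ and $V_S = \mathbb{E}_\varphi[\tilde U_1^2]/(4\sigma^2\mathbb{E}_\varphi[\tilde\tau_1])$. Since Wald gives $\mathbb{E}_\varphi[\tilde\tau_1] = \mathbb{E}_\varphi[\nu_1]\mathbb{E}_\varphi[\tau_1]$, the inequality $V_S \geq V_T$ reduces to the claim $\mathbb{E}_\varphi[\tilde U_1^2] \geq \mathbb{E}_\varphi[\nu_1]\mathbb{E}_\varphi[U_1^2]$.

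The core of the proof is a martingale decomposition. Introduce the $\mathcal{G}$-martingales $M_n=\sum_{i=1}^n W_i$, $M'_n=\sum_{i=1}^n U_i$, and $G_n=\sum_{1\leq i<j\leq n} W_iW_j$, all with i.i.d.\ mean-zero increments. The identity $\tilde W_1^2=\sum_{i=1}^{\nu_1} W_i^2+2G_{\nu_1}$ implies $\tilde A_1 = \sum_{i=1}^{\nu_1}A_i + 2G_{\nu_1}$, so
\begin{equation*}
\tilde U_1 \;=\; M'_{\nu_1} \;+\; (b-\tilde b)M_{\nu_1} \;+\; 2G_{\nu_1}.
\end{equation*}
I would first evaluate $\tilde b-b$ by applying optional stopping to the martingale $M_nT_n - \mathbb{E}[\tau_1]\sum_{k=1}^nM_{k-1} - n\mathbb{E}[W_1\tau_1]$ (with $T_n = \sum_{k=1}^n\tau_k$), which yields $\tilde b-b = 2\gamma/\mathbb{E}_\varphi[\nu_1]$ where $\gamma := \mathbb{E}_\varphi[\sum_{k=1}^{\nu_1}M_{k-1}] = \mathbb{E}_\varphi[\nu_1 M_{\nu_1}]$ (the second equality following from optional stopping applied to the martingale $nM_n - \sum_{k=1}^n M_{k-1}$). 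Next I would expand $\mathbb{E}_\varphi[\tilde U_1^2]$ into its six inner products and evaluate each via optional stopping applied to an appropriate product-martingale; the fourth-moment hypothesis supplies the uniform integrability needed in every case.

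After the identities $\mathbb{E}_\varphi[(M'_{\nu_1})^2]=\mathbb{E}_\varphi[\nu_1]\mathbb{E}_\varphi[U_1^2]$, $\mathbb{E}_\varphi[M_{\nu_1}^2]=\mathbb{E}_\varphi[\nu_1]\sigma^2\mathbb{E}_\varphi[\tau_1]$, $\mathbb{E}_\varphi[G_{\nu_1}^2]=\sigma^2\mathbb{E}_\varphi[\tau_1]\mathbb{E}_\varphi[\sum_{k=1}^{\nu_1}M_{k-1}^2]$, $\mathbb{E}_\varphi[M_{\nu_1}M'_{\nu_1}]=\mathbb{E}_\varphi[\nu_1]\mathbb{E}_\varphi[U_1W_1]$, $\mathbb{E}_\varphi[M_{\nu_1}G_{\nu_1}]=\sigma^2\mathbb{E}_\varphi[\tau_1]\gamma$, and $\mathbb{E}_\varphi[M'_{\nu_1}G_{\nu_1}]=\mathbb{E}_\varphi[U_1W_1]\gamma$ are substituted, the coefficients multiplying $\mathbb{E}_\varphi[U_1W_1]$ cancel exactly because $\gamma=(\tilde b-b)\mathbb{E}_\varphi[\nu_1]/2$, leaving
\begin{equation*}
\mathbb{E}_\varphi[\tilde U_1^2] \;=\; \mathbb{E}_\varphi[\nu_1]\mathbb{E}_\varphi[U_1^2] \;+\; 4\sigma^2\mathbb{E}_\varphi[\tau_1]\Big(\mathbb{E}_\varphi\Big[\sum_{k=1}^{\nu_1}M_{k-1}^2\Big] - \gamma^2/\mathbb{E}_\varphi[\nu_1]\Big).
\end{equation*}
The bracketed term is non-negative by Cauchy--Schwarz: writing $\gamma = \sum_{k\geq 1}\mathbb{E}_\varphi[\mathbb{I}(k\leq\nu_1)M_{k-1}]$, applying Cauchy--Schwarz inside each summand and once more to the outer sum gives $\gamma^2 \leq \mathbb{E}_\varphi[\nu_1]\mathbb{E}_\varphi[\sum_{k=1}^{\nu_1}M_{k-1}^2]$, and the proof is complete.

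I anticipate the main obstacle will be the bookkeeping in the second step: identifying the correct predictable compensator for each of the three product-martingales $M_nM'_n$, $M_nG_n$, and $M'_nG_n$, and verifying that optional stopping applies in every case under the assumed fourth-moment control. A secondary subtlety is that in the present non-classical setting $\tilde b\neq b$ in general (they coincide only when $\nu_1$ is independent of the cycles), so the correction $(b-\tilde b)M_{\nu_1}$ is essential; it drives the cancellation of the $\mathbb{E}_\varphi[U_1W_1]$ terms and thereby exposes the non-negative quantity $\mathbb{E}_\varphi[\tilde U_1^2] - \mathbb{E}_\varphi[\nu_1]\mathbb{E}_\varphi[U_1^2]$.
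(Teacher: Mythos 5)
Your proposal is correct and follows essentially the same route as the paper's proof: the same reduction via Wald's identities, the same identification $\tilde b - b = 2\gamma/\mathbb{E}_\varphi[\nu_1]$ (the paper's constant $C$ equals your $\gamma$), the same martingale-increment computation (the paper packages $M'_{\nu_1}+(b-\tilde b)M_{\nu_1}+2G_{\nu_1}$ as a single martingale $\tilde W_n^2-\sigma^2 T_n - b_S\tilde W_n$ and sums squared increments, which gives identical cross-term cancellations), and the same closing Cauchy--Schwarz/Jensen bound $\gamma^2\leq\mathbb{E}_\varphi[\nu_1]\,\mathbb{E}_\varphi\bigl[\sum_{k=1}^{\nu_1}M_{k-1}^2\bigr]$.
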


\cite{ACG95} establishes a similar result on classical regenerative processes and considers stopping times that depend 
only on history of the process, and hence these stopping times turn out to be geometrically distributed 
due to the $i.i.d.$ nature of classical regenerations. Our result is a mild extension of Theorem~1 in \cite{ACG95} in the sense that the stopping time 
can be more general (hence, its distribution may not be geometric).\\ 

Now we consider two important applications of Proposition~\ref{KSvsKT}. 
In both the applications we assume that the function $h$ satisfies $h\left( Y(t) \right) = \hat{h}\left( Q(t), V(t) \right)$ 
(a well known example is when $h(Y(t)) = \|Q(t)\|$ to estimate the steady-state expected customers in the network). 
Then using Proposition~\ref{KSvsKT} and path-wise construction, first we show that the selection of interarrival time decomposition with the largest mean 
exponential component (that is, $\lambda_k = \lambda_{f_k}$) minimizes the AVSDE.
In the second application, we show that when Class~1 interarrival times have an exponential distribution, 
the alternative regenerative structure proposed in Section~\ref{RSMCN} is optimal compared with the primary regenerative structure.
\begin{itemize}
 \item[\textbf{(1)}] 
 Recall that the arrival clock of each Class~$k \in \{2, \dots, L\}$ is updated based on the decomposition~(\ref{TwoMP2}), that is,
\begin{equation*}
        \xi_{k,n} = (1 - \beta_{k,n})\tilde{\xi}_{k,n} + \beta_{k,n} \left(E_{k,n} + Z_{k,n} \right), \ n \in \integers_+,
\end{equation*}
where $\xi_{k,n}$ is the $n^{th}$ interarrival time of class~$k$, $E_{k,n} \sim Exp(\lambda_k)$ for some $\lambda_k \geq \lambda_{f_k}$, 
$\beta_{k,n}$ is a Bernoulli random variable with $P(\beta_{k,n} = 1) = \bar{q}_k$, $Z_{k,n} \sim G^{f_k}_{\lambda_k}$ and 
$\tilde{\xi}_{k,n} \sim \displaystyle \frac{F_k(x) - \bar{q}_k\int_0^x f_k(y)dy}{1- \bar{q}_k}$, $x~\geq~0$. Note that 
$\{ \tilde{\xi}_{k,n} : n\geq~0\}$, $\left\{ E_{k,n} : n\geq 0\right\}$, $\left\{ \beta_{k,n} : n\geq 0\right\}$ and $\left\{Z_{k,n} : n\geq 0\right\}$ 
are $i.i.d.$ sequences and independent of each other.\\

To see the first claim, fix $k \in \{2, \dots, L\}$, and let $\lambda_k^{(1)}, \lambda_k^{(2)} \geq \lambda_{f_k}$ such that 
$\lambda_k^{(1)} < \lambda_k^{(2)}$. When $E_{k,n} \sim Exp\left(\lambda_k^{(1)}\right)$ and 
$E'_{k,n} \sim Exp\left(\lambda_k^{(2)}\right)$, from Theorem~\ref{thm_dec1}, without loss of generality one can write $E_{k,n} = E'_{k,n} + Z'_{k,n},\ n\geq 1$, for an $i.i.d.$ sequence of
positive random variables $\left\{Z'_{k,n},\ n\geq 1\right\}$ independent of $\left\{E'_{k,n},\ n\geq 1\right\}$. 
Suppose that $Y^{(1)}$ (respectively, $Y^{(2)}$) represents the process $Y$ when $\left\{E_{k,n}:n\geq 0 \right\}$ (respectively, $\left\{E'_{k,n}:n\geq 0 \right\}$) is 
the sequence of exponential components and $\left\{Z_{k,n}:n\geq 0 \right\}$ (respectively, $\left\{Z'_{k,n} + Z_{k,n}:n\geq 0 \right\}$)
is the sequence of non-exponential components of interarrival times of class~$k$.\\

Now it is clear that the regenerative structure (denote it by $\{S_n : n \geq 0\}$) associated with $Y^{(2)}$ is a subsequence of the regenerative structure 
(denote it by $\{T_n : n \geq 0\}$) associated with $Y^{(1)}$.
By letting $\tau_i = T_i - T_{i-1}$ and $R_i = \int_{T_{i-1}}^{T_i} h\left( Y^{(1)}(s) \right)ds \left( = \int_{T_{i-1}}^{T_i} h\left( Y^{(2)}(s) \right)ds\right)$  for $i \geq 1$, we can easily see that
$\left\{ \left(R_i, \tau_i \right): i \geq 1 \right\}$ is adapted to 
$\mathcal{G} = \left\{\mathcal{G}_n = \sigma \left( (R_i, \tau_i), \mathbf{1}_i\right) \right\}$ and $\left\{ \left(R_i, \tau_i \right): i \geq n \right\}$ 
is independent of $\mathcal{G}_{n-1}$ for all $n \geq 1$, where $\mathbf{1}_i$ is equal to one if $Y^{(2)}$ is in exponential phase at $T_i$, otherwise, 
it is zero. If we assume that $\mathbb{E}_\varphi\left[\left(\int_0^{S_1} \left[h\left(X(s)\right) + 1\right]\right)^4\right] < \infty$ (for example, if 
$h(Y(t)) \leq \|Y(t)\|$ then, from Example~\ref{Moment_Y_Ex}, this is guaranteed when the $9^{th}$ moments of the interarrival and service times are finite)
then from Proposition~\ref{KSvsKT}, it is optimum to choose $\lambda_k = \lambda_{f_k}$ for each $k = 2,3, \dots, L$.
It is important to note that there exists a stopping time $\nu \geq 1$ adapted to $\mathcal{G}$ such that $S_1 = T_\nu$. In fact, 
$\mathbb{I}(\nu = n) = \mathbb{I}(\mathbf{1}_n = 1)\prod_{i = 0}^{n-1}\mathbb{I}(\mathbf{1}_i = 0)$. Since $\mathbf{1}_i$'s are not mutually independent, 
$\nu$ is not a geometric random variable.\\

\item[\textbf{(2)}] In addition to Assumption (A1) - (A6), assume that Class~1 interarrival times have exponential distribution with rate $\lambda_1$. Recall 
the primary regenerative structure that we proposed earlier: $T_n$ is the $n^{th}$ instant when an arrival of Class~1 finds the system empty 
and all the other non-null exogenous classes are in exponential phase. Now recall the alternative regenerative structure proposed in
Remark~\ref{alt_reg}: $\displaystyle \hat{T}_n = \hat{S}_n + \min_{1 \leq k \leq L}\left\{U^{(e)}_k\left(\hat{S}_n\right)\right\}$, 
where $\hat{S}_n$ is the $n^{th}$ instant when departure of a customer leaves the network empty and all the interarrival clocks are in exponential phase. 
Since the interarrival times of Class~1 are exponentially distributed, $\left\{T_n:n\geq 0\right\}$ is a subsequence of $\left\{\hat{T}_n:n\geq 0\right\}$, and there
exists $\nu \geq 1$ such that $\mathbb{I}(\nu = n) = \mathbb{I}(\mathbf{1}_n = 1)\prod_{i = 0}^{n-1}\mathbb{I}(\mathbf{1}_i = 0)$
and $T_1 = \hat{T}_{\nu}$, where $\mathbf{1}_n$ equals to one if the first arrival immediately after $\hat{S}_n$ is a Class~1 customer, otherwise, it equals to zero;
hence $P\left(\mathbf{1}_n = 1\right) = \frac{\lambda_1}{\lambda_1 + \lambda_2 + \cdots + \lambda_L}$, $n \geq 1$. 
If $ \mathcal{G} = \left\{\mathcal{G}_n = \sigma \left( (R_i, \tau_i), \mathbf{1}_i\right) \right\}$ then from Proposition~\ref{KSvsKT}, 
$\left\{\hat{T}_n:n\geq 0\right\}$ is optimal compared with $\left\{T_n:n\geq 0\right\}$.
\end{itemize}

\section{Simulations}
\label{Reg_sim}
In this section, we present some simple numerical examples to illustrate the regenerative simulation method proposed in the previous sections.
In all the examples, our goal is to  estimate the steady-state expected number of customers in the network shown in Fig.~\ref{MCOQN_Ex}
under the assumption that the queueing discipline is FCFS. 
\begin{figure}[h]
\begin{center}
\includegraphics[height=3.5cm,width=9cm]{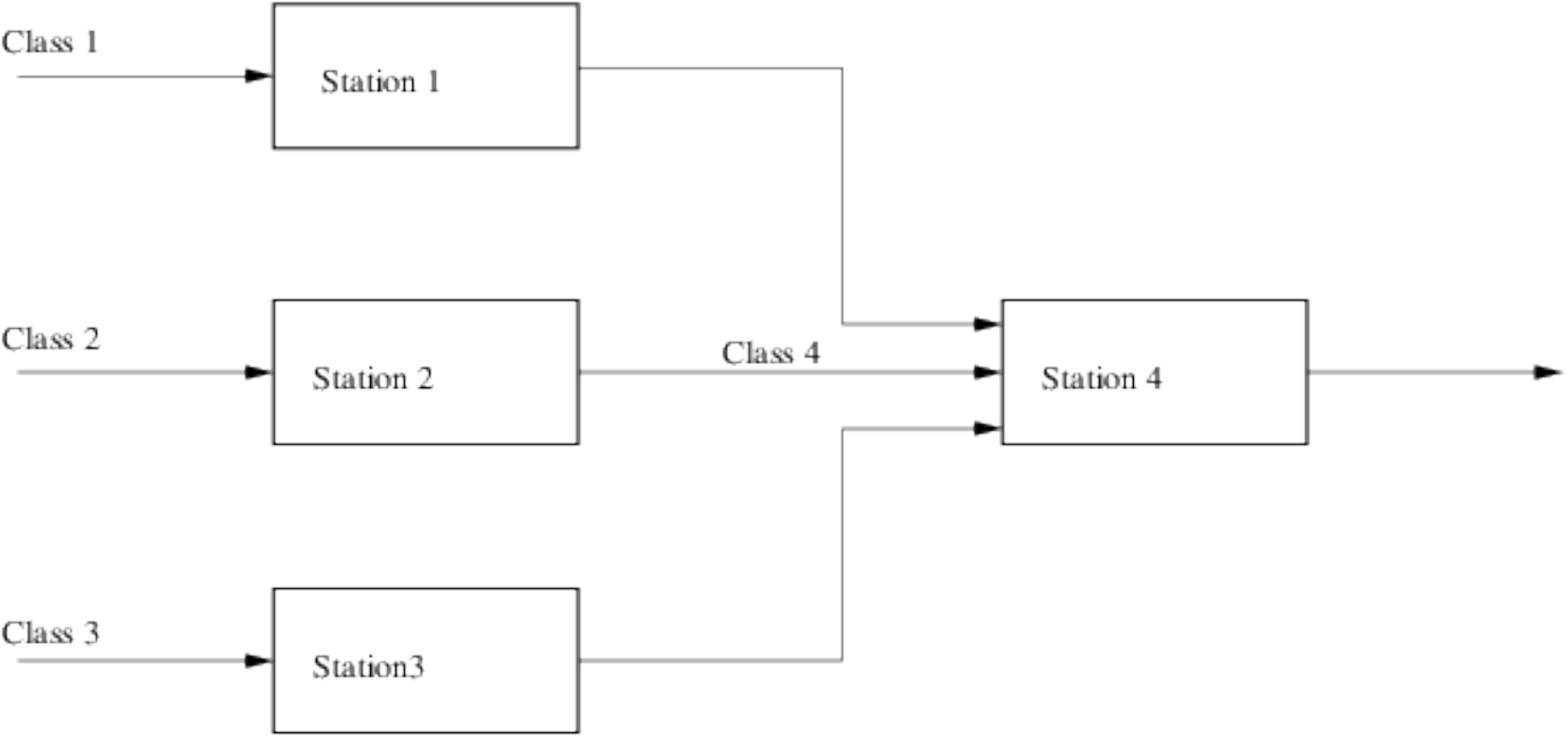}
\caption{\footnotesize  A queueing network with four stations and four classes of customers (three non-null exogenous and one null exogenous).}
\label{MCOQN_Ex}
\end{center}
\end{figure}

In Example~\ref{exp:opt_lambda}, we illustrate that the selection of interarrival time decomposition with the largest mean exponential component minimizes the AVSDE.
Example~\ref{exp:class_1_exp} illustrates the two applications presented in Section~\ref{Freq_reg}. In Example~\ref{exp:pri_better_than_alt} and 
Example~\ref{exp:alt_better_than_pri}, we consider the case where both the primary and alternative regenerative structures exist in the network and neither 
of them is a subsequence of the other. The former example illustrates a scenario where the primary regenerative structure has lower estimated AVSDE compared to the alternative structure, 
while the latter illustrates a scenario where the alternative regenerative structure has lower estimated AVSDE compared to the primary structure. \\

To see how the estimators are constructed in the examples below, it would be useful to recall the regenerative mean and the standard deviation estimators
$\beta(t)= \frac{\sum_{i=1}^{N(t)} R_i}{\sum_{i=1}^{N(t)} \tau_i} \text{  and  }
s(t)~=~\sqrt{\frac{\sum_{i=1}^{N(t)}\left( R_i - \beta(t)\tau_i \right)^2}{\sum_{i=1}^{N(t)} \tau_i}}$, respectively,
where $N(t) = \max\left\{ n \geq 0: T_n \leq t \right\}$, $\tau_i = T_i - T_{i-1}, i \geq 1, $ and $R_i = \int_{T_{i-1}}^{T_i} h\left( X(s) \right)ds, \ i \geq 1$
for a non-negative function $h$. The associated asymptotic $95\%$ confidence interval is $\beta(t) \pm \frac{1.96s(t)}{\sqrt{t}}$.

Also recall that the AVSDE
$\mathcal{K}_{22} = \frac{1}{4\sigma^2}\frac{\mathbb{E}_{\varphi} \left[\left(W_1^2 - \sigma^2 \tau_1 - b W_1\right)^2\right]}{\mathbb{E}_{\varphi}(\tau_1)}$, 
where $W_i = R_i - \bar{r}\tau_i, \ i \geq 0$, $r = \frac{\mathbb{E}_{\varphi} R_1}{\mathbb{E}_{\varphi} \tau_1}$ and
$b = 2\mathbb{E}_\varphi\left( W_1 \tau_1\right)/\mathbb{E}_\varphi \tau_1$. For large $t$ which guarantees small confidence interval,
$\bar{r}$ can be estimated by $\beta(t)$ and $b$ can be estimated by $b(t)= \displaystyle2\frac{\sum_{i=1}^{N(t)} \left(R_i - \beta(t)\tau_i\right)\tau_i}{\sum_{i=1}^{N(t)} \tau_i}$.
Then the AVSDE $\mathcal{K}_{22}$ can be estimated by
\begin{align*}
\mathcal{K}(t) :=\frac{\sum_{i=1}^{N(t)} \left[ \left(\left(R_i - \beta(t)\tau_i\right)^2 - s^2(t)\tau_i - b(t)\left(R_i - \beta(t)\tau_i\right)\right)^2\right]}{4s^2(t)\sum_{i=1}^{N(t)}\tau_i}.
\end{align*}

\begin{example}
\label{exp:opt_lambda}
\normalfont
This example corresponds to network specifications given in Table~\ref{tab:spec1}.

\begin{table}[h]
\centering
\caption{Network specifications}
 \begin{tabular}{|C{0.7cm}|C{2.5cm}|C{3.5cm}|C{0.7cm}|C{1.8cm}|C{1.8cm}|C{1.8cm}|}
       \hline
        Class $k$ &  Interarrival time \newline distribution $F_k$ & Service time \newline distribution $H_k$ & $\lambda_{f_k}$ &
                                     Effective arrival \newline rate $\alpha_k$& Mean service rate $\mu_k$&  Work load at Station $k$ ($\rho_k $)\\
       \hline
        1. &  $Uniform(0,40)$ &  $HyExp(3/4,3/20 ,1/20 )$ & \ \newline  ---&
                                 \ \newline 1/20& \ \newline 1/10& \ \newline  0.5 \\
       \hline
        2. &   $Pareto(10,1/18)$ & $HyExp(1/2,2/3 , 2)$&\ \newline 11/18&
                               \ \newline 2 & \ \newline 1 &  \ \newline 0.5\\
       \hline
        3. &  $Pareto(10,1/9)$&  $Exp(4/3)$ & \ \newline 11/9 & \ \newline 1&
                               \ \newline 4/3 &  \ \newline 0.75\\
       \hline
        4. &  \ \newline --- &  $Exp(5)$ &  \ \newline --- &   \ \newline 3.05 & \ \newline 5 &  \ \newline 0.61\\
       \hline
 \end{tabular}
$Uniform(a,b)$ denotes the uniform distribution over the interval $[a,b]$, $Pareto(a,b)$ represents Pareto distribution with shape parameter $a$
and scale parameter $b$, $HyExp(q,a,b)$ represents hyper-exponential distribution with pdf $g(x) = q a e^{-a x} + (1 - q) b e^{-b x}, \ x \geq 0,$ and $\lambda_{f_k}$ is defined by (\ref{lambda_star}).
\label{tab:spec1}
\end{table}

Observe that Assumptions (A1) - (A5) are trivially satisfied. Since the interarrival time distribution of Class~1 is uniform, 
alternative regenerations are not possible under this set-up (see Remark~\ref{alt_reg}).
This network is a special case of generalized Jackson networks and hence the fluid model is stable as the nominal load $\rho_i < 1, \ i = 1,\dots, 4$ 
(see Remark \ref{remark:assA6}). Thus Assumption (A6) holds and the primary regenerative structure exists. Notice that the $9^{th}$ moments of the interarrival and service times are finite. 
Hence, from Example \ref{Moment_Y_Ex}, $\mathbb{E}_\varphi\left[ R_1^4 + \tau_1^4\right] < \infty$. 
This guarantees asymptotically valid confidence intervals and also the finiteness of the AVSDE.

\begin{table}[h]
\centering
\caption{Simulation results associated with Example~\ref{exp:opt_lambda}}
 \begin{tabular}{|C{5cm}|C{2.9cm}|C{3.2cm}|C{3cm}|}
       \hline
          & $\lambda_k = \lambda_{f_k}$, $k = 2,3$ & $\lambda_k = 1.5\lambda_{f_k}$, $k = 2,3$ & $\lambda_k = 2\lambda_{f_k}$, $k = 2,3$ \\
       \hline
         No. of cycles generated $\left(N(\tilde{T})\right)$ & 13144 & 3892 & 1663 \\
       \hline
        Estimated expected queue length $\left(\beta(\tilde{T})\right)$ &  5.86 & 5.86 & 5.86\\
       \hline
        $95\%$ confidence interval &$5.86 \pm 6.2 \times 10^{-6}$ &$5.86 \pm 6.2 \times 10^{-6}$ & $5.86 \pm 6.3 \times 10^{-6}$\\
       \hline
        Estimated TAVC $\left(s(\tilde{T})^2\right)$ & $1.0 \times 10^3$&$1.0 \times 10^3$ &$1.0 \times 10^3$ \\
       \hline
       Estimated AVSDE
       $\left(\mathcal{K}(\tilde{T})\right)$ &$1.8 \times 10^6$ &$4.6 \times 10^6$ &$6.0 \times 10^6$ \\
       \hline
 \end{tabular}
 
Total duration of the simulation $\tilde{T} = 10^7$ time units.
\label{tab:res1}
\end{table}

Table~\ref{tab:res1} displays the simulation results. It can be observed that the estimated AVSDE is increasing as the rates $\lambda_k, k = 2,3,$ 
of exponential components are increasing and is small when $\lambda_k = \lambda_{f_k}, \ k =2,3$. As expected, the estimated TAVC is not changing with $\lambda_k$. 
\qed
\end{example}

\begin{example}
\normalfont
\label{exp:class_1_exp}
Network specifications are same as those given in Table~\ref{tab:spec1} except that Class~1 interarrival times have exponential distribution with rate 
$\frac{1}{20}$ (hence, there is no change in the effective arrival rate of Class~1). We estimate the required parameters associated with four 
cases shown in Table~\ref{tab:res2}.

\begin{table}[h]
\centering
\caption{Simulation results associated with Example~\ref{exp:class_1_exp}}
 \begin{tabular}{|C{3cm}|C{2.1cm}|C{2.1cm}|C{2.1cm}|C{2.1cm}|C{2.1cm}|}
       \hline
         & Alternative \newline regenerations \newline with \newline $\lambda_k = \lambda_{f_k}$, \newline $k = 2,3$ & Primary \newline regenerations \newline with \newline $\lambda_k = \lambda_{f_k}$, \newline $ k = 2,3$
         & Alternative \newline regenerations \newline with \newline $\lambda_k = 2\lambda_{f_k}$, \newline $k = 2,3$ & Primary \newline regenerations \newline with \newline $\lambda_k = 2\lambda_{f_k}$, \newline $k = 2,3$
         & Alternative \newline regenerations \newline with \newline $\lambda_k = 5\lambda_{f_k}$, \newline $k = 2,3$\\
       \hline
         No. of cycles generated $\left(N(\tilde{T})\right)$ & 393000 & 10226 & 98000 & 1315 &15644\\
       \hline
         Estimated expected queue length $\left(\beta(\tilde{T})\right)$ &  6.08 & 6.08 &  6.08 & 6.08& 6.08\\
       \hline
        $95\%$ confidence interval &$6.08 \pm 7 \times 10^{-6}$ & $6.08 \pm 7 \times 10^{-6}$ & $6.08 \pm 7 \times 10^{-6}$ &$6.08 \pm 7 \times 10^{-6}$&$6.08 \pm 7 \times 10^{-6}$ \\
       \hline
        Estimated TAVC $\left(s(\tilde{T})^2\right)$ & $1.28 \times 10^3$ & $1.28 \times 10^3$ & $1.28 \times 10^3$ & $1.27 \times 10^3$& $1.30 \times 10^3$\\
       \hline
        Estimated AVSDE 
       $\left(\mathcal{K}(\tilde{T})\right)$ &\ \newline $ 1.44\times 10^6$ & \ \newline $ 2.95\times 10^6$ & \ \newline $ 1.45\times 10^6$ & \ \newline $ 1.25\times 10^7$ &\ \newline  $4.2 \times 10^6$\\
       \hline
\end{tabular}

Total duration of the simulation $\tilde{T} = 10^7$ time units.
\label{tab:res2}
\end{table}
This example illustrates three important observations: First, we observe that whether it is the primary or the alternative regenerative
structure, selection of $\lambda_k = \lambda_{f_k}, k = 2,3$ is always an optimal choice. Second, since Class~1 has exponentially distributed interarrival times, 
the alternative regenerative structure associated with $\lambda_k = \lambda_{f_k}, k = 2,3$ is a super-sequence 
of all the other three regenerative structures and as we expect, it has lower estimated AVSDE compared to the others. 
Last, it illustrates that the alternative regenerative structure is optimal compare with the primary structure when $\lambda_k$ is fixed for each Class~$k = 2,3$. 
\qed
\end{example}

\begin{example}
\normalfont
 \label{exp:pri_better_than_alt}
 In this example, as mentioned earlier, we illustrate a case where the primary structure has lower estimated AVSDE compared to the alternative but 
 neither of them is a subsequence of the other. 
 
 We assume that the specifications are same as those given in Table~\ref{tab:spec1} except that Class~1 interarrival times 
 $\xi_{1,i}~\sim~E~+~\bar{W}$, $i \geq 1$, where $E$ is an exponential random variable with rate $10$ and
 $\bar{W}$ is a Weibull random variable with the shape parameter $2$ and the scale parameter $\Gamma(1.5)/19.9$; here $\Gamma(\cdot)$ is the Gamma function. 
 Since the shape parameter is larger than 1, the  distribution  of $\bar{W}$ is superexponential and the distribution of $\xi_{1,1}$ has an exponential tail.
 Notice that even under this setup, the effective arrival rate of Class~1 is $\frac{1}{20}$. 
 \begin{table}[h]
 \centering
\caption{Simulation results associated with Example~\ref{exp:pri_better_than_alt}}
 \begin{tabular}{|C{7cm}|C{3cm}|C{3cm}|}
       \hline
         & Primary \newline regenerations& Alternative \newline regenerations \\
       \hline
         No. of cycles generated $\left(N(\tilde{T})\right)$ & 11630 & 2604 \\
       \hline
         Estimated expected queue length $\left(\beta(\tilde{T})\right)$ &  5.8 & 5.8 \\
       \hline
        $95\%$ confidence  interval &$5.8 \pm 6.0 \times 10^{-6}$ & $5.8 \pm 6.0 \times 10^{-6}$  \\
       \hline
        Estimated TAVC $\left(s(\tilde{T})^2\right)$ & $945$ & $950$ \\
       \hline
        Estimated AVSDE
       $\left(\mathcal{K}(\tilde{T})\right)$ &\ \newline $ 1.47\times 10^6$ & \ \newline $5.24\times 10^6$  \\
       \hline
\end{tabular}

Total duration of the simulation $\tilde{T} = 10^7$ time units.
\label{tab:res3}
\end{table}

Table~\ref{tab:res3} shows the simulation results associated with the case  where $\lambda_k = \lambda_{f_k}$,  $k = 2,3$. 
The primary regenerations are more frequent than the alternative because the ratio of the exponential mean to the total mean 
$\frac{\mathbb{E}\left[E \right]}{\mathbb{E}\left[E + \bar{W}\right]} = 0.005$, and 
the simulation results show that the primary regenerative structure has lower estimated AVSDE compared to the other. 
\qed
\end{example} 
 
\begin{example}
\normalfont
\label{exp:alt_better_than_pri}
 This example provides the results associated with a case where the alternative regenerative structure has lower estimated AVSDE compared to the primary but
 neither of them is a subsequence of the other. 
 
 Network specifications are same as those in Example~\ref{exp:pri_better_than_alt} except that the rate of the exponential random variable $E$ is $0.5$, and 
 the shape and scale parameters of Weibull random variable $\bar{W}$ are $2$ and $\Gamma(1.5)/18$, respectively.
 Notice that the effective arrival rate of Class~1 is again $\frac{1}{20}$. Table~\ref{tab:res4} shows the simulation results associated with the case
 where $\lambda_k = \lambda_{f_k}$, $k = 2,3$.
 \begin{table}[h]
 \centering
\caption{Simulation results associated with Example~\ref{exp:alt_better_than_pri}}
 \begin{tabular}{|C{7cm}|C{3cm}|C{3cm}|}
       \hline
         & Primary \newline regenerations& Alternative \newline regenerations \\
       \hline
         No. of cycles generated $\left(N(\tilde{T})\right)$ & 2841 & 53224 \\
       \hline
         Estimated expected queue length $\left(\beta(\tilde{T})\right)$ &  5.8 & 5.8 \\
       \hline
        $95\%$ confidence interval &$5.8 \pm 6.3 \times 10^{-6}$ & $5.8 \pm 6.3 \times 10^{-6}$  \\
       \hline
        Estimated TAVC $\left(s(\tilde{T})^2\right)$ & $990$ & $995$ \\
       \hline
       Estimated AVSDE
       $\left(\mathcal{K}(\tilde{T})\right)$ &\ \newline $ 4.50\times 10^6$ & \ \newline $1.08\times 10^6$  \\
       \hline
\end{tabular}

Total duration of the simulation $\tilde{T} = 10^7$ time units.
\label{tab:res4}
\end{table}

In the present scenario, $\frac{\mathbb{E}\left[E \right]}{\mathbb{E}\left[E + \bar{W}\right]} = 0.1$.
The simulation results show that the alternative regenerations are more frequent than the primary regenerations and the former has lower estimated AVSDE compared to the latter. 
\qed
\end{example}

\section{Summary}
In this paper, first we showed that, under mild conditions, a random variable with an exponential or heavier tail can be re-expressed as a mixture of sums of independent
random variables where one of the constituents is exponential distributed. 
This result was illustrated for several distributions, such as, hyper-exponential, Gamma, log-normal, Pareto, Weibull, etc.
Using this, we developed two implementable regenerative simulation methods for multiclass 
open queueing networks where the interarrival times have exponential or heavier tails. 
The first method is applicable even when one of the non-null exogenous classes has interarrival times with superexponential distributions.
We referred to the sequence of regeneration times associated with this method as the primary regenerative structure.
The second method is applicable only when all the interarrival time distributions of non-null exogenous classes have exponential or heavier tails, and the 
associated regeneration times are known as the alternative regenerative structure.

Under mild stability conditions, we established that the finite $p^{th}$ moments of the interarrival and service times are sufficient to guarantee 
the finite $p^{th}$ moments of the regeneration cycle lengths $\tau_i, i~\geq~1$. Furthermore, we studied some sufficient conditions that 
guarantee finite $4^{th}$ moments of $R_i, i~\geq~1$. These moments are crucial as they guarantee asymptotically valid confidence intervals
for steady-state performance measures of interest. 
Finally, under certain assumptions, we showed that when one regenerative structure is a subsequence of another,
the AVSDE associated with the subsequence is at least as large as that associated with the original sequence.
One application of this result is to show that the selection of interarrival time decomposition with the largest mean exponential component minimizes the AVSDE. 
In another application, we showed that when at least one of the non-null exogenous classes has exponentially distributed interarrival times, the AVSDE
associated with the alternative regenerative structure is smaller than that associated with the primary regenerative structure.

In \cite{SS13}, we briefly discuss a different regenerative simulation technique that can be 
applicable even when the interarrival times of some (or all) non-null exogenous classes have superexponential tails.
This may be a fruitful direction for further research.

\section{Proofs}
\label{sec:proofs}

\begin{proof}[Proof of Theorem \ref{thm_dec1}]
First notice that
\begin{align}
\label{G_lam_eq2}
G_\lambda^f(x)  &= \frac{f(a)}{\lambda} + \int_{a}^x(f(y) + \frac{1}{\lambda}f^{'}(y))dy,  \text{  for   }x \geq a.
\end{align}
Suppose that $\lambda \geq \lambda_f$. Then from (\ref{lambda_star}), $f(y) + \frac{1}{\lambda}f^{'}(y) \geq 0$ for every $y \in \reals$.
Since $\lambda_f$ is finite, it is not difficult to show that 
$\displaystyle \lim_{x \rightarrow -\infty} f(x) = \lim_{x \rightarrow \infty} f(x) = 0$.  Thus $\int_{a}^{\infty} f'(y)dy~=~-f(a)$ and it follows that,
\begin{eqnarray*}
   \int_{a}^{\infty}(f(y) + \frac{1}{\lambda}f^{'}(y))dy + \frac{f(a)}{\lambda} = 1.
\end{eqnarray*}
Therefore, from (\ref{G_lam_eq2}), $G^f_\lambda$ is a probability distribution and it is clear that $G_\lambda^f$ has a point mass $\frac{f(a)}{\lambda}$
at $a$ and the density $f(\cdot) + \frac{1}{\lambda}f^{'}(\cdot)$ on $(a,\infty)$.

To see (\ref{EZ_dec}), observe that when a random variable $Z$ is independent of $E$, the characteristic function of $E+Z$
\begin{eqnarray}
\label{ap1}
 \phi_{E+Z}(t) = \phi_Z(t)\phi_E(t)= \phi_Z(t)\left(\frac{\lambda}{\lambda - it}\right), \ t \in \mathbb{R}.
\end{eqnarray}
Suppose that $Z \sim G^f_\lambda$. Then 
\begin{eqnarray}
\label{ap2}
 \phi_Z(t) &=& \frac{f(a)}{\lambda}e^{iat} + \int_a^{\infty}e^{izt}\left(f(z) + \frac{1}{\lambda}f^{'}(z)\right)dz
\cr                 &=& \frac{(\lambda - it)}{\lambda}\left[\int_a^{\infty}e^{izt}f(z)dz  \right].
\end{eqnarray}
By substituting (\ref{ap2}) in (\ref{ap1}), we have $ \phi_{E+Z}(t) =  \phi_\xi(t)$.
Since characteristic function uniquely identifies the probability distribution, it follows that (\ref{EZ_dec}) holds. 

To prove the converse, notice that when $E$ and $Z$ are independent,
 $\phi_Z(t)~=~\frac{(\lambda - it)}{\lambda} \phi_\xi(t)$. When $\phi_\xi$ is absolutely integrable, 
 from inversion formula (e.g, Theorem 12.3 in Chapter II of \cite{SHI96}),
 $f(x) = \frac{1}{2\pi}\int_{-\infty}^\infty e^{-ixt} \phi_\xi(t)dt$ and
\begin{align*}
 P(Z \leq x) - P(Z \leq y) &= \lim_{c\rightarrow \infty} \frac{1}{2\pi}\int_{-c}^c \frac{e^{-ity} - e^{-itx}}{it}\phi_Z(t)dt
\end{align*}
for every $x$ and $y$ $(x > y)$ at which $P(Z \leq \cdot)$ is continuous. Then
\begin{align*}
 P(Z \leq x) - P(Z \leq y) &= F(x) - F(y) - \lim_{c\rightarrow \infty} \frac{1}{2\pi \lambda}\int_{-c}^c \left(e^{-ity} - e^{-itx} \right)\phi_\xi(t)dt \\
             &= F(x) - F(y) - \frac{\left(f(y) - f(x) \right)}{\lambda} \\
\end{align*}
Let $y \rightarrow -\infty$ on both sides then it is clear that 
the probability distribution of $Z$ should be of the form (\ref{G_lam_eq}). However, under continuity of $f'$, it is easy to observe that $G^f_\lambda$ is a 
valid distribution only if $\lambda \geq \lambda_f$. 
\end{proof}
\ \\
\begin{proof}[Proof of Lemma \ref{Moment1}]
 From Assumption (A6), it follows that for any given $\delta~>~0$ there exist $t_0 > 1$ and $s_0 > \frac{\delta}{t_0}$ such that
 $\mathbb{E}_y\left[ \|Y(t_0\|y\|)\|^p\right] \leq \frac{1}{2}\|y\|^p$ for all $y$ with $\|y\|~>~s_0$.
That means, for every $y \in \mathcal{Y}$ and for any fixed $s \geq s_0$,
\begin{eqnarray}
\label{bound1}
\mathbb{E}_y\left[ \|Y(t_0\|y\|)\|^p\mathbb{I}(y \notin C_s)\right] &\leq& \frac{1}{2}\|y\|^p.
\end{eqnarray}
Let $t(y) := \left\{
           \begin{array}{l l}
                    \delta+1 & \quad \text{if $y \in C_s$},\\
                    t_0\|y\| & \quad \text{if $y \notin C_s$}\
           \end{array} \right.
$
and define the sequence of stopping times as follows:
$\zeta_0 = 0$ and $\zeta_{i+1} = \zeta_i + \theta_{\zeta_i} \circ t\left(Y(\zeta_i) \right)$ for every $ i \geq 0$.
Let $\hat{Y}_i = Y(\zeta_i), i\geq 0$ and $\hat{N}~=~\inf\left\{i \geq 0: \hat{Y}_i \in C_s\right\}$.

Then $\{\hat{Y}_i\}_{i\geq 0}$ is a Markov chain with transition probability kernel defined by $\hat{P}(y,A) := P_y(Y(t(y)) \in A)$ 
and $\hat{N}$ is the first hitting time on set $C_s $ along the Markov chain $\hat{Y}$.
Let $\hat{\mathcal{F}}_n := \sigma(\hat{Y}_0, \hat{Y}_1, \cdots , \hat{Y}_n)$, $n = 0,1, \dots $ be the natural filtration generated by $\hat{Y}$.
Then (\ref{bound1}) can be expressed as
\begin{eqnarray}
\label{bound2}
\mathbb{E}_y\left[ \|\hat{Y}_1\|^p\mathbb{I}(y \notin C_s )\right] &\leq& \frac{1}{2}\|y\|^p, \text{  for all } y \in \mathcal{Y}
\end{eqnarray}
Since $\tau_{C_s}(\delta) \leq \zeta_{\hat{N}}$, it is enough to show that $\mathbb{E}_y(\zeta_{\hat{N}}^p) < \infty$.
Notice that,
\begin{eqnarray*}
	\zeta_{\hat{N}} = \sum_{i=0}^{\hat{N} - 1} t(\hat{Y}_i) = t_0\|\hat{Y}_0\|\mathbb{I}(Y(0) \notin C_s) + (\delta+1)\mathbb{I}(Y(0) \in C_s) + \sum_{i=1}^{\infty} t_0\|\hat{Y}_i\|\mathbb{I}(i \leq \hat{N} - 1),
\end{eqnarray*}
and
\begin{eqnarray*}
	\mathbb{E}_y\left[\|\hat{Y}_i\|^p\mathbb{I}(i \leq \hat{N} - 1)\right] &\leq& \mathbb{E}_y\left[\|\hat{Y}_i\|^p\mathbb{I}(i \leq \hat{N} )\right]\\
\cr                            &=& \mathbb{E}_y\left[\|\hat{Y}_i\|^p\mathbb{I}(\min(\|\hat{Y}_1\|, \cdots, \|\hat{Y}_{i-1}\|) > s)\right] \\
\cr                            &=& \mathbb{E}_y\left[\mathbb{I}(\min(\|\hat{Y}_1\|, \cdots, \|\hat{Y}_{i-2}\|) > s)\mathbb{E}_y\left[\|\hat{Y}_i\|^p\mathbb{I}(\|\hat{Y}_{i-1}\| \notin C_s)|\hat{\mathcal{F}}_{i-1}\right]\right].
\end{eqnarray*}
Under strong Markov property of $Y$ and (\ref{bound2}), we have
\begin{align*}
 &\mathbb{E}_y\left[\|\hat{Y}_i\|^p\mathbb{I}(\|\hat{Y}_{i-1}\| \notin C_s)|\hat{\mathcal{F}}_{i-1}\right] \leq \frac{1}{2}\|\hat{Y}_{i-1}\|^p \text{   and}\\
 &\mathbb{E}_y\left[\|\hat{Y}_i\|^p\mathbb{I}(i \leq \hat{N} - 1)\right] \leq
                          \frac{1}{2}\mathbb{E}_y\left[\|\hat{Y}_{i-1}\|^p\mathbb{I}(\min(\|\hat{Y}_1\|, \cdots, \|\hat{Y}_{i-2}\|) > s)\right].
\end{align*}
Now using recursion, we have $\mathbb{E}_y\left[\|\hat{Y}_i\|^p\mathbb{I}(i \leq \hat{N} - 1)\right] \leq \frac{1}{2^i}\|y\|^p < \infty, \ \ \ \forall \ i = 1,2, \dots$
Recursive application of Minkowski's inequality imply that for every $y \in \mathcal{Y}$
\begin{eqnarray*}
	\left(\mathbb{E}_y\left[\zeta_{\hat{N}}^p\right]\right)^{\frac{1}{p}} &\leq&  \max (\delta+1,t_0\|y\|) + t_0 \sum_{i=1}^{\infty} \left(\mathbb{E}_y\left[\|\hat{Y}_i\|\mathbb{I}(i \leq \hat{N} - 1)\right]^p\right)^{\frac{1}{p}}\\
\cr                                                                      &\leq&    \max (\delta+1,t_0\|y\|) + t_0 \sum_{i=1}^{\infty} \frac{1}{\left(2^\frac{1}{p}\right)^i} \|y\| \\
\cr                                                                      &\leq&    \delta+1 + t_0\left(1 - \frac{1}{2^\frac{1}{p}}\right)^{-1}\|y\|.
\end{eqnarray*}
Hence
\begin{eqnarray*}
	\mathbb{E}_y\left[\zeta_{\hat{N}}^p\right]  \leq  2^{p-1}\left((\delta+1)^p + \left[t_0\left(1 - \frac{1}{2^\frac{1}{p}}\right)^{-1} \|y\|\right]^p\right) = c_1 + c_2\|y\|^p,
\end{eqnarray*}
where $c_1 = 2^{p-1}(\delta+1)^p$ and $c_2 = 2^{p-1}\left[t_0 \left(1 - \frac{1}{2^\frac{1}{p}}\right)^{-1}\right]^p$.
Therefore, for any bounded set $A$ we have	$\sup_{y \in A}\mathbb{E}_y\left[\tau_{C_s}(\delta)^p\right] < \infty$.
\end{proof}
\ \\
\begin{proof}[Proof of Lemma \ref{lemma3}]
     From Lemma \ref{Moment1},  $\mathbb{E}_y\left[\tau_{C_s}(\delta)^p \right] \leq c_1 + c_2\|y\|^p$ for some constants $c_1$ and $c_2$.
     Now define $\zeta_0 := 0$ and $\zeta_{i+1} := \zeta_{i} + \theta_{\zeta_i} \circ \tau_{C_s}(\delta), \ i \geq 0$.
     Let $\hat{N} = \min\left\{i \geq 1 : \left(\theta_{\zeta_i} \circ \tilde{\Gamma}\right) \leq \delta\right\}$,
     then it is clear that
     \begin{align*}
	     \tilde{\Gamma} &\leq \zeta_{\hat{N}} + \delta \\
	              &=    \zeta_{\hat{N} -1} + \theta_{\zeta_{\hat{N}-1}} \circ \tau_{C_s}(\delta) + \delta \\
		      &=    \sum_{i=0}^{\hat{N} - 1}\theta_{\zeta_{i}} \circ \tau_{C_s}(\delta) + \delta \\
		      &=    \sum_{i=0}^{\infty}\left(\theta_{\zeta_{i}} \circ \tau_{C_s}(\delta)\right)\mathbb{I}\left(i < \hat{N}\right) + \delta.
     \end{align*}
     Using Minkowski's inequality, for any $y \in \mathcal{Y}$,
     \begin{align}
	  \label{Min_1}
	     \left(\mathbb{E}_y \left[\tilde{\Gamma}^p\right]\right)^{\frac{1}{p}} &\leq \left(\mathbb{E}_y\left[\tau_{C_s}(\delta)^p\right]\right)^{\frac{1}{p}}
	       + \sum_{i=1}^{\infty}\left(\mathbb{E}_y\left[\left(\theta_{\zeta_{i}} \circ \tau_{C_s}(\delta)\right)^p\mathbb{I}\left(i < \hat{N}\right)\right]\right)^{\frac{1}{p}} + \delta.
     \end{align}
     Now consider
     \begin{align}
       \mathbb{E}_y\left[\left(\theta_{\zeta_{i}} \circ \tau_{C_s}(\delta)\right)^p\mathbb{I}\left(i < \hat{N}\right)\right]
           &\leq \mathbb{E}_y\left[\left(\theta_{\zeta_{i}} \circ \tau_{C_s}(\delta)\right)^p\mathbb{I}\left(i \leq \hat{N}\right)\right] \nonumber\\
	   &=\mathbb{E}_y\left[\mathbb{I}\left(i \leq \hat{N}\right)\mathbb{E}_y\left[\left(\theta_{\zeta_{i}} \circ \tau_{C_s}(\delta)\right)^p|\mathcal{F}_{\zeta_i}\right]\right]\nonumber\\
	   &=\mathbb{E}_y\left[\mathbb{I}\left(i \leq \hat{N}\right)\mathbb{E}_{Y(\zeta_i)}\left[(\tau_{C_s}(\delta))^p\right]\right]\nonumber\\
	   &\leq M P_y\left(i \leq \hat{N}\right)\label{M_ind},
     \end{align}
     where $M = \sup_{y \in C_s} \mathbb{E}_y\left[(\tau_{C_s}(\delta))^p\right]$, which is finite from Lemma \ref{Moment1}.
     
     Since $\mathbb{I}\left(i \leq \hat{N}\right) = \mathbb{I}\left(\left(\theta_{\zeta_j} \circ \tilde{\Gamma}\right) > \delta, \forall j = 1,\dots,i-1\right)$ and
     $Y(\zeta_i) \in C_s,\ i \geq 1$, we have
     \begin{align*}
	     P_y\left(i \leq \hat{N}\right) &= \mathbb{E}_y\left[ \mathbb{E}_y\left[\mathbb{I}\left(i-1 < \hat{N}\right)|\mathcal{F}_{\zeta_{i-1}}\right]\right] \nonumber\\
	                                   &= \mathbb{E}_y\left[\mathbb{I}\left(i-2 < \hat{N}\right) P_{Y(\zeta_{i-1})}\left(\tilde{\Gamma} > \delta\right)\right] \nonumber\\
					   &\leq (1-\epsilon) P_y\left(i-2 < \hat{N}\right), 
     \end{align*}
     where $\displaystyle\epsilon := \inf_{y \in C_s} P_y(\tilde{\Gamma} \leq \delta) > 0$. Using recursion,
     \begin{align}
     \label{eqn:P_y}
	     P_y\left(i \leq \hat{N}\right) \leq (1-\epsilon)^{i-1}.
     \end{align}
     Hence from (\ref{Min_1}), (\ref{M_ind}) and Lemma \ref{Moment1},
     \begin{align*}
	     \left(\mathbb{E}_y \left[\tilde{\Gamma}^p\right]\right)^{\frac{1}{p}} &\leq (c_1 + c_2\|y\|^p)^\frac{1}{p}
	                            + M^\frac{1}{p}\sum_{i=1}^{\infty}\left((1-\epsilon)^{\frac{1}{p}}\right)^{i-1} + \delta = (c_1 + c_2\|y\|^p)^\frac{1}{p} + c,	
     \end{align*}
     where $c = M^\frac{1}{p}\frac{1}{1 - (1-\epsilon)^{\frac{1}{p}}} + \delta $. Thus $\mathbb{E}_y \left[\tilde{\Gamma}^p\right] \leq 2^{p-1}\left(c_1 + c_2\|y\|^p + c^p\right)$.
     We can finish the proof by taking $c_3 = 2^{p-1}(c_1 +c^p)$ and $c_4 = 2^{p-1} c_2$.
\end{proof}
\ \\
\begin{proof}[Proof of Lemma \ref{Moment_D}]
 From Lemma \ref{lemma3}, now it is sufficient to show that $\displaystyle\inf_{y \in C_s} P_y( \Gamma_D + U_1(\Gamma_D) \leq \delta) > 0$ for some $\delta > 0$ and $s > s_0$.
  This can be proved by finding times $0 < t_1 < t_2 < \infty$ such that, with positive probability, by time $t_1$ exactly one customer per each Class 
  $k \in \{2,\dots,L\}$ arrives into the network and by time $t_2$  system can hit the set $D$.

  Fix any $s \geq s_0$ and choose $t_1 = s$ then by time $t_1$ each Class $k \in \{2,\dots,L\}$ can enter into exponential phase immediately after the first arrival
  with probability $\bar{q}_k$ for any initial state $Y(0) \in C_s$.

 Since the network is open (in fact $(I - P')^{-1}$ exists), there is a positive probability of any customer leaving the network by making at most
 $K$ distinct transitions, where the transition refers to customers' class change upon completion of their service. Under (A5), there exists $x'>0$ such that 
 $P(\xi_{1,1} \geq x \text{ and } \sum_{l=1}^K \eta_{l,1} \leq x - x') > 0$. That is, the system can become empty,
 with a positive probability, within a finite time, call it $t'$, when there are no more arrivals from any Class $k \in \{2, \dots,L\}$. We can choose $t_2 > t'$
 such that there is an arrival of Class~1 into empty system within $t_2$ when all the other non-null exogenous classes are in exponential phase.

 By letting $\delta = \max\{t_2,1\}$, we can conclude that $\inf_{y \in C_s} P_y(\Gamma_D + U_1(\Gamma_D)\leq \delta) > 0.$
\end{proof}
\ \\
\begin{proof}[Proof of Proposition \ref{Moment_Y}]
     Recall the sequence of stopping times defined in Lemma \ref{lemma3}, that is, $\zeta_0 = 0 $ and
     $\zeta_{i+1} = \zeta_{i} + \theta_{\zeta_i} \circ \tau_{C_s}(\delta), \ i \in \integers_+$.     
     Let $\hat{N}~=~\min\left\{i \geq 1 : \left(\theta_{\zeta_i} \circ \left(\Gamma_D + U_1(\Gamma_D)\right)\right) \leq \delta\right\}$,
     then it is clear that $T_1 = \Gamma_D + U_1(\Gamma_D) \leq \zeta_{\hat{N}+1}$ and, since $h$ is a positive function,
     \begin{align*}
	     \int_0^{T_1} h\left( Y(t)\right) dt  &\leq  \sum_{i=0}^{\hat{N}} \int_{\zeta_i}^{\zeta_{i+1}} h\left( Y(t) \right)  dt.
     \end{align*}
     Using Minkowski's inequality, 
     \begin{align*}
           \left(\mathbb{E}_\varphi\left[\left( \int_0^{T_1} h\left( Y(t) \right)dt\right)^r\right]\right)^{\frac{1}{r}} &\leq  \sum_{i=0}^{\infty} \left(\mathbb{E}_\varphi \left[ \left(\int_{\zeta_i}^{\zeta_{i+1}} h\left( Y(t) \right) dt \right)^r \mathbb{I} \left( i \leq \hat{N} \right)\right]\right)^{\frac{1}{r}}\\
           &= \left(\mathbb{E}_\varphi\left[ \left(\int_0^{\zeta_1} h\left( Y(t) \right) dt\right)^r\right]\right)^{\frac{1}{r}} \\
             & \ \ \ \ \ + \sum_{i=1}^\infty  \left(\mathbb{E}_\varphi\left[ \left(\int_{\zeta_i}^{\zeta_{i+1}} h\left( Y(t) \right) dt\right)^r \mathbb{I}\left( i \leq \hat{N} \right)\right]\right)^{\frac{1}{r}}.
     \end{align*}
     Observe that
     \begin{align*}
      \mathbb{E}_\varphi\left[ \left(\int_0^{\zeta_1} h\left( Y(t) \right) dt\right)^r \right]
                   &= \mathbb{E}_\varphi\left[ \left(\int_0^{\tau_{C_s}(\delta)} h\left( Y(t) \right) dt\right)^r \right] \\
                   &= \mathbb{E}\left[ J_{r,s}\left( Y(0)\right) \right]  < \infty.
     \end{align*}
     Now consider,
     \begin{align*}
       \mathbb{E}_\varphi\left[\left(\int_{\zeta_i}^{\zeta_{i+1}} h\left( Y(t) \right) dt \right)^r \mathbb{I}\left( i \leq \hat{N} \right)\right]
	&\leq \mathbb{E}_\varphi\left[\mathbb{I}\left(i \leq \hat{N}\right)\mathbb{E}_\varphi\left[ \left.\left(\int_{\zeta_i}^{\zeta_{i+1}} h\left( Y(t) \right) dt\right)^r\right|\mathcal{F}_{\zeta_i}\right]\right]\nonumber\\
	&= \mathbb{E}_\varphi\left[\mathbb{I}\left(i \leq \hat{N}\right)\mathbb{E}_{Y(\zeta_i)}\left[ \left(\int_0^{\tau_{C_s}(\delta)} h\left( Y(t) \right) dt\right)^r\right]\right]\nonumber \\
	&= \mathbb{E}_\varphi\left[\mathbb{I}\left(i \leq \hat{N}\right) J_{r,s}\left(Y(\zeta_i)\right) \right] \nonumber.
     \end{align*}
     Since $Y(Y(\zeta_i)) \in C_s, i \geq 1$,
     \begin{align*}
        \mathbb{E}_\varphi\left[\left(\int_{\zeta_i}^{\zeta_{i+1}} h\left( Y(t) \right) dt \right)^r \mathbb{I}\left( i \leq \hat{N} \right)\right]
        &\leq M P_\varphi\left(i \leq \hat{N}\right),
     \end{align*}
     where $M = \sup_{y \in C_s} J_{r,s}(y)$ and it is finite from the given hypothesis.

     If we replace $\tilde{\Gamma}$ with $\Gamma_D + U_1(\Gamma_D)$ in Lemma~\ref{lemma3} then from (\ref{eqn:P_y}), 
     we have $P_\varphi \left(i \leq \hat{N}\right) \leq (1-\epsilon)^{i-1}$, where 
     $\displaystyle\epsilon~:=~\inf_{y \in C_s} P_y(\Gamma_D + U_1(\Gamma_D) \leq \delta)$ and it is positive from Lemma~\ref{Moment_D}. Hence
     \begin{align*}
	\left(\mathbb{E}_\varphi\left[ \left(\int_0^{T_1} h\left( Y(t) \right)dt\right)^r\right]\right)^{\frac{1}{r}}
	                                               \leq \left(\mathbb{E}\left[ J_{r,s}\left( Y(0)\right)\right]\right)^{\frac{1}{r}}
	                                               + M^\frac{1}{r}\sum_{i=1}^{\infty}\left((1-\epsilon)^{\frac{1}{r}}\right)^{i-1} 
	                                                < \infty.
     \end{align*}
     This establishes $\mathbb{E}_\varphi \left[R_1^r \right] < \infty$.
\end{proof}
\ \\

\begin{proof}[Proof of Proposition~\ref{KSvsKT}]
Let $W_i = R_i - r\tau_i$, where $\bar{r} = \frac{\mathbb{E}_\varphi\left[R_1\right]}{\mathbb{E}_\varphi\left[\tau_1\right]}$. 
From Wald's identity, $\mathbb{E}_\varphi\left[\tilde{W}^2_{\nu_1}\right] = \mathbb{E}_\varphi\left( \nu_1 \right) \mathbb{E}_\varphi\left[W^2_1 \right]$
and $\mathbb{E}_\varphi\left[ S_1 \right] = \mathbb{E}_\varphi\left( \nu_1 \right) \mathbb{E}_\varphi\left[\tau_1 \right]$, 
where $\tilde{W}_n = \sum_{i=0}^n W_i$, $n \geq 0$ with $W_0 = 0$. 
Hence, $\sigma^2 = \frac{\mathbb{E}_\varphi\left[W^2_1\right]}{\mathbb{E}_\varphi\left[\tau_1\right]} = 
\frac{\mathbb{E}_\varphi\left[\tilde{W}^2_{\nu_1}\right]}{\mathbb{E}_\varphi\left[S_1\right]}$.
From Theorem \ref{reg_1_thm}, the AVSDEs with respect to the regenerative structures $\left\{T_n: n \geq 0\right\}$
and $\left\{S_n: n \geq 0\right\}$ are  
$\mathcal{K}^T_{22}~=~\frac{1}{4\sigma^2}\frac{\mathbb{E}_{\varphi} \left[\left(W_1^2 - \sigma^2 \tau_1 - b_T W_1\right)^2\right]}{\mathbb{E}_{\varphi}(\tau_1)}$ and
$\mathcal{K}^S_{22}~=~\frac{1}{4\sigma^2}\frac{\mathbb{E}_{\varphi} \left[\left(\tilde{W}_{\nu_1}^2 - \sigma^2 S_1 - b_S \tilde{W}_{\nu_1}\right)^2\right]}{\mathbb{E}_{\varphi}(S_1)}$,
respectively, where $b_T = 2\frac{\mathbb{E}_{\varphi}\left[W_1 \tau_1\right]}{\mathbb{E}_{\varphi}\left[\tau_1\right]}$
and $b_S~=~2\frac{\mathbb{E}_{\varphi}\left[\tilde{W}_{\nu_1} S_1\right]}{\mathbb{E}_{\varphi}\left[S_1\right]}$.

It is now enough to prove that
\begin{align*}
\mathbb{E}_{\varphi} \left[\left(\tilde{W}_{\nu_1}^2 - \sigma^2 S_1 - b_S \tilde{W}_{\nu_1}\right)^2\right] \geq
\mathbb{E}_{\varphi}(\nu_1)\mathbb{E}_{\varphi} \left[\left(W_1^2 - \sigma^2 \tau_1 - b_T W_1\right)^2\right].
\end{align*}
Let $\nu = \nu_1$ and $M_n = \tilde{W}_n^2 - \sigma^2 T_n - b_S \tilde{W}_n$. 
Then clearly, $\left\{M_n: n \geq 0\right\}$ is a martingale with respect to $\mathcal{G}$, and 
$\mathbb{E}_\varphi\left[M^2_n\right] < \infty, \ n \geq 0$ 
Then using the fact $\nu$ is a stopping time with respect to $\mathcal{G}$, we have
 $\mathbb{E}_\varphi\left[ M_\nu^2\right]~=~\mathbb{E}_\varphi\left[\sum_{n=1}^\nu \left(M_n - M_{n-1} \right)^2\right]\label{Corr_eq1}$.
 Furthermore,
 \begin{align*}
\mathbb{E}_\varphi\left[ \tilde{W}_\nu S_1 \right] &= \mathbb{E}_\varphi\left[ \sum_{i=0}^\nu W_i \sum_{j=0}^\nu \tau_j \right] \\
                                                   &= \mathbb{E}_\varphi\left[ \sum_{i=0}^\nu W_i\tau_i \right]
                                                        + \mathbb{E}_\varphi\left[ \sum_{0 \leq i < j \leq \nu} W_i \tau_j \right]
                                                        + \mathbb{E}_\varphi\left[ \sum_{0 \leq j < i \leq \nu} W_i \tau_j \right]\\
                                                   &= \mathbb{E}_\varphi\left( \nu \right) \mathbb{E}_\varphi\left[W_1 \tau_1 \right]
                                                        + \mathbb{E}_\varphi\left[\tau_1\right]\mathbb{E}_\varphi\left[\sum_{j=1}^\nu \sum_{i = 0}^{j-1} W_i \right]
                                                        + \mathbb{E}_\varphi\left[W_1\right]\mathbb{E}_\varphi\left[\sum_{i=1}^\nu \sum_{j = 0}^{i-1} \tau_j \right]\\
                                                   &= \mathbb{E}_\varphi\left( \nu \right) \mathbb{E}_\varphi\left[W_1 \tau_1 \right] +
                                                       \mathbb{E}_\varphi\left[\tau_1\right]\mathbb{E}_\varphi\left[\sum_{j=1}^\nu \tilde{W}_{j-1} \right].
\end{align*}
Then it is clear that $b_S = b_T  + 2 \frac{C}{\mathbb{E}_\varphi\left( \nu \right)}$,
where $\displaystyle C = \mathbb{E}_\varphi\left[\sum_{n=1}^\nu \tilde{W}_{n-1} \right]$. Now we can write
\begin{align*}
 M_n = M_{n-1} + \left(W_n^2 - \sigma^2 \tau_n - b_T W_n \right) + 2 W_n \left(\tilde{W}_{n-1} - \frac{C}{\mathbb{E}_\varphi\left( \nu \right)} \right)
 \end{align*}
 and
 \begin{align*}
 \left(M_n - M_{n-1}\right)^2 &= \left(W_n^2 - \sigma^2 \tau_n - b_T W_n \right)^2 + 4 W^2_n \left(\tilde{W}_{n-1} - \frac{C}{\mathbb{E}_\varphi\left( \nu \right)} \right)^2 \\
                              &\ \ \ \ + 4 \left(W_n^3 - \sigma^2 \tau_n W_n - b_T W^2_n \right)\left(\tilde{W}_{n-1} - \frac{C}{\mathbb{E}_\varphi\left( \nu \right)} \right).
\end{align*}
Then we have
\begin{align*}
 \mathbb{E}_\varphi\left[ M_\nu^2\right]
             &=\mathbb{E}_\varphi\left[\sum_{n=1}^\nu \left(M_n - M_{n-1} \right)^2\right] \\
             &= \sum_{n=1}^\infty \mathbb{E}_\varphi\left[ \left(M_n - M_{n-1} \right)^2\mathbb{I}(\nu \geq n)\right] \\
             &= \mathbb{E}_\varphi\left[ \left(W_1^2 - \sigma^2 \tau_1 - b_T W_1 \right)^2\right]\mathbb{E}_\varphi (\nu) \nonumber\\
             &\ \  \ \ + 4 \mathbb{E}_\varphi\left(W^2_1\right) \left\{\mathbb{E}_\varphi\left[\sum_{n=1}^\nu \tilde{W}_{n-1}^2\right] + \frac{C^2}{\left(\mathbb{E}_\varphi (\nu)\right)^2}\mathbb{E}_\varphi (\nu)\right. \nonumber\\
             &\ \  \ \ - \left.2 \frac{C}{\mathbb{E}_\varphi (\nu)} \mathbb{E}_\varphi\left[\sum_{n=1}^\nu \tilde{W}_{n-1}\right]\right\}\nonumber\\
             &\ \  \ \ + 4 \mathbb{E}_\varphi\left[W_1^3 - \sigma^2 \tau_1 W_1 - b_T W^2_1 \right]  \left(\mathbb{E}_\varphi\left[\sum_{n=1}^\nu \tilde{W}_{n-1}\right] - C \right). 
\end{align*}
Cancellation of some terms will result in
\begin{align*}
 \mathbb{E}_\varphi\left[ M_\nu^2\right]
             &= \mathbb{E}_\varphi\left[ \left(W_1^2 - \sigma^2 \tau_1 - b_T W_1 \right)^2\right]\mathbb{E}_\varphi (\nu) + 4 \mathbb{E}_\varphi\left(W^2_1\right) \left(\mathbb{E}_\varphi\left[\sum_{n=1}^\nu \tilde{W}_{n-1}^2\right]
                                          - \frac{C^2}{\mathbb{E}_\varphi (\nu)} \right).
\end{align*}
To prove the proposition, now it is enough to show that
$\displaystyle\mathbb{E}_\varphi\left[\sum_{n=1}^\nu \tilde{W}_{n-1}^2\right] \geq \frac{C^2}{\mathbb{E}_\varphi (\nu)}$. 
From Cauchy-Schwarz inequality, $ \mathbb{E}_\varphi\left[\tilde{W}_{n-1}^2\mathbb{I}(\nu \geq n)\right]P_\varphi(\nu \geq n) \geq \left(\mathbb{E}_\varphi\left[\tilde{W}_{n-1}\mathbb{I}(\nu \geq n)\right]\right)^2$.
Hence,
\begin{align*}
\mathbb{E}_\varphi\left[\sum_{n=1}^\nu \tilde{W}_{n-1}^2\right]
                      &=\sum_{n=1}^\infty \mathbb{E}_\varphi\left[\tilde{W}_{n-1}^2\mathbb{I}(\nu \geq n)\right]\\
                      &\geq \sum_{n=1}^\infty \frac{\left(\mathbb{E}_\varphi\left[\tilde{W}_{n-1}\mathbb{I}(\nu \geq n)\right]\right)^2}{P_\varphi(\nu \geq n)}\\
                      &= \mathbb{E}_\varphi (\nu)\sum_{n=1}^\infty \left[\left(\frac{\mathbb{E}_\varphi\left[\tilde{W}_{n-1}\mathbb{I}(\nu \geq n)\right]}{P_\varphi(\nu \geq n)}\right)^2\frac{P_\varphi(\nu \geq n)}{\mathbb{E}_\varphi (\nu)}\right]\\
                      &\geq \mathbb{E}_\varphi (\nu)\left(\sum_{n=1}^\infty \frac{\mathbb{E}_\varphi\left[\tilde{W}_{n-1}\mathbb{I}(\nu \geq n)\right]}{\mathbb{E}_\varphi (\nu)}\right)^2,
\end{align*}
where the last inequality follows from Jensen's inequality because $q_n := \frac{P_\varphi(\nu \geq n)}{\mathbb{E}_\varphi (\nu)},\ n\geq 0$ is a probability
distribution on non-negative integers. Since $R_n \geq 0, n~\geq~0$, it is easy to observe that  
\begin{align*}
 \sum_{n=1}^\infty \mathbb{E}_\varphi\left[\tilde{W}_{n-1}\mathbb{I}(\nu \geq n)\right] = C.
\end{align*}
\end{proof}
\bibliographystyle{acm}
\bibliography{references}
\end{document}